\documentclass[a4paper,10pt]{amsart}

\usepackage{amsmath}
\usepackage{amsthm}
\usepackage{hyperref}
\usepackage{amsfonts}
\usepackage{amssymb}
\usepackage{euscript}
\usepackage[all]{xy}
\usepackage{color}

\def\CC{\mathbb{C}}

\newcommand{\PP}{\mathbb{P}}

\newcommand{\pr}{\textrm{pr}}



\newcommand{\nc}{\newcommand}

\numberwithin{equation}{section}
\newtheorem{thm}{Theorem}[section]
\newtheorem{prop}[thm]{Proposition}
\newtheorem{lem}[thm]{Lemma}

\theoremstyle{remark}
\newtheorem{rem}[thm]{Remark}
\newtheorem{definition}[thm]{Definition}

\newtheorem*{ack}{Acknowledgments}

\nc{\hk}{\hookrightarrow}
\newcommand{\bea}{\begin{equation}}
\newcommand{\ena}{\end{equation}}

\newcommand{\Fl}{\mathcal{F}l^a}

\begin{document}
\title[Degenerate flag varieties of type A and C are Schubert varieties]{Degenerate flag varieties of type A and C \\are Schubert varieties}
\author{Giovanni Cerulli Irelli, Martina Lanini}
\address{Giovanni Cerulli Irelli:
Dipartimento di Matematica. Sapienza-Universit\`a di Roma. Piazzale Aldo Moro 5, 00185, Rome (ITALY)}
\email{cerulli.math@googlemail.com}
\address{Martina Lanini:
Department Mathematik, Universit\"at Erlangen-N\"urnberg,
Cauerstra\ss e 11, 91058 Erlangen (Germany)}
\email{lanini@math.fau.de}

\begin{abstract}
We show that any type $A$ or $C$ degenerate flag variety is in fact isomorphic to a Schubert variety in an appropriate  partial flag manifold.
\end{abstract}
\maketitle
\section{Introduction and Main Result}\label{Sec:Intro}
Appeared for the first time in the 19th Century to encode  questions in enumerative geometry, flag varieties and their Schubert varieties had been intensively studied since then, constituting  an important investigation object in topology, geometry, representation theory and algebraic combinatorics.
In the years, several variations of these varieties have been considered (affine flag and Schubert varieties, Kashiwara flag varieties, matrix Schubert varieties, toric degenerations of flags, ...). Among them, we want to focus on a class introduced recently by E.~Feigin in \cite{F2010}: the degenerate flag varieties. These are flat degenerations of  (partial) flag manifolds and turned out to be very interesting from a representation theoretic and geometric point of view. For instance, they can be used to determine a $q$-character formula for characters of irreducible modules  in type A \cite{FF, FFL} and C \cite{FFLTypeC, FFLTypeCVariety}. As for the geometry, degenerate flag varieties share several properties with Schubert varieties: they are irreducible, normal locally complete intersections with terminal and rational singularities \cite{F2010, FF, FFLTypeCVariety}. In this work we show that any degenerate flag variety of type A or C not only has a lot in common with Schubert varieties, but  it is actually isomorphic to a Schubert in an appropriate partial flag variety. In short:

\begin{thm}\label{Thm:Intro}
Degenerate flag varieties of type A and C are Schubert varieties.
\end{thm}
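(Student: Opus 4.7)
Proof proposal. The plan is to realize each degenerate flag variety as the image of an explicit closed embedding into an ordinary partial flag variety of a larger vector space, and then to recognize the image as the closure of a single Borel orbit, i.e.\ as a Schubert variety. The strategy is parallel in types A and C, the only real difference being the ambient group.

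First, in type A, I would construct a closed embedding $\iota\colon \Fl_n \hookrightarrow Fl(d_1,\dots,d_r;N)$ into a partial flag manifold in some auxiliary $\bC^N$, where $N$ is chosen large enough ($N=2n-1$ or $2n$ being the natural candidates) to absorb the combinatorial data of the projections $\pr_i$. To a configuration $(V_1,\dots,V_{n-1})\in\Fl_n$ I would associate a flag $(W_1,\dots,W_r)$ in $\bC^N$ whose entries simultaneously record the subspaces $V_i\subseteq\bC^n$ and their interaction with the coordinate flag; the degenerate incidence condition $\pr_{i+1}(V_i)\subseteq V_{i+1}$ should be rewritten as a genuine inclusion $W_i\subseteq W_{i+1}$ after embedding. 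The second step is to identify $\iota(\Fl_n)$ with a Schubert variety $X_w$ in this ambient flag manifold: choose the standard Borel $B\subset GL_N$ and locate the unique $B$-fixed point $p_0\in\iota(\Fl_n)$, so that $\iota(\Fl_n)=\overline{B\cdot p_0}$. The element $w$ is then read off from the relative position of $p_0$ with respect to the standard complete flag, and the $T$-fixed points of $\Fl_n$ (indexed by known combinatorial data such as Dellac configurations) should match the Bruhat interval $[e,w]$ in the relevant parabolic quotient of $S_N$.

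For the type C case one carries out the entirely analogous construction inside a \emph{symplectic} partial flag variety: the ambient vector space must carry a symplectic form chosen so that the flags in $\iota(\Fl_n^C)$ are isotropic, and the resulting Schubert variety then lives in a partial flag manifold of type $C$. The main obstacle is two-fold: identifying the correct ambient variety together with the correct Weyl group element $w$, and then proving that $\iota(\Fl)$ equals \emph{all} of $X_w$ rather than a proper closed subvariety. The cleanest route to the latter equality is to observe that both $\iota(\Fl)$ and $X_w$ are irreducible closed subvarieties of the ambient flag manifold, and to check they agree on torus fixed points and have the same dimension; combined with the irreducibility and normality results of \cite{F2010,FF,FFLTypeCVariety}, this forces $\iota(\Fl)=X_w$. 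The symplectic verification needed in type C is the additional technical point where care is required, since the embedding must intertwine the degenerate combinatorics of type C with genuine isotropy with respect to a suitably chosen symplectic form on $\bC^N$.
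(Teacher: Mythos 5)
Your overall strategy --- embed the degenerate flag variety into a partial flag manifold of a larger space so that the degenerate incidence conditions $\pr_{i}(V_i)\subseteq V_{i+1}$ become honest inclusions, then recognize the image as a Schubert variety --- is exactly the paper's: there $W=\bC^{2n}$, the surjections $\pi_i\colon \langle e_1,\dots,e_{n+i}\rangle\twoheadrightarrow V$ are written down explicitly, $\zeta_i(U)=\pi_i^{-1}(U)$, and the target is $SL_{2n}/P$ with $P=P_{\omega_1+\omega_3+\cdots+\omega_{2n-1}}$. The gap is in your mechanism for the identification step. First, a slip: the unique $B$-fixed point of any closed $B$-stable subvariety of $G/P$ is the base point $eP/P$, and $\overline{B\cdot eP/P}$ is just that point; what you need is the $T$-fixed point of the \emph{open} $B$-orbit, i.e.\ the maximal $\tau$ whose cell $\mathcal{C}_\tau$ lies in the image. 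Second, and more seriously, ``same $T$-fixed points and same dimension'' does not force two irreducible closed subvarieties to coincide unless one is already known to contain the other; normality contributes nothing here. To get a containment you must either prove the image is $B$-stable and contains the coset $\sigma P/P$ (whence $X_\sigma\subseteq\zeta(\Fl_{n+1})$, and then conclude by dimension and irreducibility), or describe the image by explicit closed conditions. Either way the explicit description of the image is the actual mathematical content, and your proposal leaves it unspecified.

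For comparison, the paper avoids both the orbit-closure argument and any appeal to the known irreducibility: it shows $\zeta(\Fl_{n+1})$ is precisely the locus of flags with $\langle e_1,\dots,e_{i-1}\rangle\subseteq W_i\subseteq\langle e_1,\dots,e_{n+i}\rangle$, rewrites these two inclusions as the rank conditions $\dim\bigl(W_i\cap\langle e_1,\dots,e_k\rangle\bigr)\geq\#\{l\leq 2i-1\mid\sigma(l)\leq k\}$ for the explicit permutation $\sigma_n$, and invokes Fulton's characterization of Schubert varieties by rank conditions. This keeps the proof independent of the irreducibility and normality results of \cite{F2010,FF,FFLTypeCVariety} (which the theorem then reproves), whereas your fallback quietly relies on them. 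Finally, in type C the paper does not rerun the construction inside a symplectic flag variety: it verifies that the type A isomorphism intertwines the orthogonal-complement involutions on source and target, via the identity $b_V[\pi_{m-i}(v),\pi_{m+i}(w)]=b_W[v,w]$, and then passes to $\iota$-fixed loci. Your plan of checking isotropy of the image directly amounts to the same computation, which is the one technical verification you would still need to carry out.
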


This result is based on the realization of degenerate flag varieties in terms of linear algebra, which is due to  E.~Feigin in type A  \cite[Theorem~2.5]{Feigin_Genocchi}  and to  E.~Feigin, M.~Finkelberg and P.~Littelmann in type C \cite[Theorem~1.1]{FFLTypeCVariety}. This description does not use any further information on the geometry of such varieties, and hence the theorem provides an independent proof of  their geometric properties such as normality, irreducibility, rational singularities, cellular decomposition, which have been established in \cite{F2010}, \cite{Feigin_Genocchi}, \cite{FF} and \cite{FFLTypeCVariety} by direct analysis.

We now state the precise version of Theorem~\ref{Thm:Intro} in the case of complete flags of type A (in Section~\ref{Sec:Parabolic} we discuss the case of partial flags, while in Section~\ref{Sec:Symplectic} we discuss the symplectic case).
Let $n\geq 1$ and $B\subset SL_{2n}$ be the subgroup of upper triangular matrices. For a weight $\lambda$ of $SL_{2n}$, let $P_{\lambda}$ be its stabilizer. Let $\omega_1,\ldots, \omega_{2n}$ be the fundamental weights and let $P:=P_{\omega_1+\omega_3+\ldots +\omega_{2n-1}}$ from now on. The Weyl group of $SL_{2n}$ is $\textrm{Sym}_{2n}$ (the symmetric group on $2n$ letters) and $P$ corresponds to the subgroup $W_J$ of $\textrm{Sym}_{2n}$ generated by the traspositions  $J=\{(2i,2i+1)_{i=1, \ldots,n-1}\}$. The variety $SL_{2n}/P$ is naturally identified with the set of partial flags
$W_1\subset W_2\subset\ldots \subset W_n$ in $\mathbb{C}^{2n}$ such that $\text{dim}(W_i)=2i-1$.

The subgroup $B$ acts on  $SL_{2n}/P$ (by left multiplication) and its orbits give  the Bruhat decomposition:
\begin{equation}\label{Eq:BruhatDecIntro}
SL_{2n}/P=\coprod_{\tau\in\textrm{Sym}_{2n}^J}B\tau P/P,
\end{equation}
where $\textrm{Sym}_{2n}^J$ is the set of permutations $\tau$ in $\textrm{Sym}_{2n}$ such that  $\tau(2i)<\tau(2i+1)$, for $i=1, \ldots, n-1$. This is the  set of minimal length representatives for the cosets in $\textrm{Sym}_{2n}/W_J$.
For a permutation $\tau\in \textrm{Sym}_{2n}^J$, let $\mathcal{C}_\tau$ be the corresponding Schubert cell in $SL_{2n}/P$, that is  $B\tau P/P$, and denote by $X_\tau=\overline{B\tau P/P }$ its closure, that is the associated Schubert variety.   Then each Schubert cell $\mathcal{C}_\tau$ has exactly one point which is fixed by the action of  the subgroup of diagonal matrices $T\subseteq B$, namely
$$\langle e_{\tau(1)}\rangle<\langle e_{\tau(1)}, e_{\tau(2)},e_{\tau(3)}\rangle<\ldots<\langle e_{\tau(1)}, e_{\tau(2)}, e_{\tau(3)}, \ldots, e_{\tau(2n-1)}\rangle.$$
(For a collection of vectors $\mathbf{v}$ of a complex vector space, we always denote by $\langle \mathbf{v}\rangle$ the subspace spanned by $\mathbf{v}$.) Let $\sigma=\sigma_n\in \textrm{Sym}_{2n}$  be the permutation defined as
\begin{equation}\label{Def:Sigma}
\sigma_n(r)=\left\{
\begin{array}{lcl}
k&\text{if}& r=2k,\\
n+1+k&\text{if}& r=2k+1.
\end{array}
\right.
\end{equation}
For example, for $n=5$ the permutation $\sigma$ is given by
$$
\left(
\begin{array}{cccccccccc}
1&2&3&4&5&6&7&8&9&10\\
6&1&7&2&8&3&9&4&10&5
\end{array}
\right).
$$
Notice that $\sigma\in\textrm{Sym}_{2n}^{J}$, indeed $\sigma(2i)=i<\sigma(2i+1)=n+1+i$ for $1\leq i\leq n-1$.

Let $\mathcal{F}l_{n+1}^a$ denote the complete degenerate flag variety associated with  $SL_{n+1}$ (see Section~\ref{Sec:Proof} for a definition of such a variety). In \cite{CFR2} it is shown that $\mathcal{F}l_{n+1}^a$  is acted upon by the maximal torus $T$ of $SL_{2n}$ (this is recalled in Section~\ref{Sec:Proof}).

We are now ready to state the precise version of Theorem~\ref{Thm:Intro} in the case of complete flags (the general result for partial flags is Theorem~\ref{Thm:Partial}).

\begin{thm}\label{Thm:Main}
There exists a T-equivariant isomorphism of projective varieties
$$
\xymatrix{
\zeta:&\Fl_{n+1}\ar^(.36){\simeq}[r]& X_\sigma\subset \textrm{SL}_{2n}/P
}
$$
where $\sigma$ is the permutation given in \eqref{Def:Sigma}
 and $P=P_{\omega_1+\omega_3+\ldots+\omega_{2n-1}}$.
\end{thm}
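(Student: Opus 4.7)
\emph{Proof plan.} The strategy is to construct an explicit $T$-equivariant morphism $\zeta \colon \Fl_{n+1} \to SL_{2n}/P$, show it is a closed immersion, and identify its image with $X_\sigma$ via a dimension count. Using Feigin's linear-algebraic description (\cite[Theorem~2.5]{Feigin_Genocchi}), a point of $\Fl_{n+1}$ is a tuple $(V_1, \dots, V_n)$ with $V_i \subset \mathbb{C}^{n+1}$ of dimension $i$ satisfying the projection conditions $\mathrm{pr}_{i+1}(V_i) \subset V_{i+1}$, where $\mathrm{pr}_j$ kills the $j$-th basis vector $w_j$ of $\mathbb{C}^{n+1}$. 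I would define $\zeta$ by sending such a tuple to a partial flag $W_1 \subset \dots \subset W_n$ in $\mathbb{C}^{2n}$ with $\dim W_i = 2i-1$. The formula for $W_i$ is pinned down by two requirements: the standard $T$-fixed point $V_j = \langle w_1, \dots, w_j\rangle$ must map to the $T$-fixed point $\langle e_1, \dots, e_{i-1}, e_{n+1}, \dots, e_{n+i}\rangle$ of the Schubert cell $\mathcal{C}_\sigma$, and the projection relations $\mathrm{pr}_{i+1}(V_i) \subset V_{i+1}$ must translate cleanly into the inclusions $W_i \subset W_{i+1}$. Together these suggest building $\zeta$ out of a pair of linear maps $\mathbb{C}^{n+1} \to \mathbb{C}^{2n}$, one landing in $\langle e_1, \dots, e_n\rangle$ and the other in $\langle e_{n+1}, \dots, e_{2n}\rangle$.

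Next I would verify $T$-equivariance, using the $T$-action on $\Fl_{n+1}$ established in \cite{CFR2}, and check that $\zeta(\Fl_{n+1}) \subset X_\sigma$ by verifying the Schubert incidence inequalities defining $X_\sigma$ in $SL_{2n}/P$. For the specific $\sigma$ of \eqref{Def:Sigma}, the essential conditions reduce to $W_i \subset F_{n+i}$ and $\dim(W_i \cap F_{i-1}) \geq i-1$, where $F_j = \langle e_1, \dots, e_j\rangle$; both should be immediate from the defining formula of $\zeta$. Injectivity of $\zeta$ would be obtained by recovering each $V_i$ as the image of $W_i$ under a fixed projection $\mathbb{C}^{2n} \to \mathbb{C}^{n+1}$ (inverting the construction), making $\zeta$ a $T$-equivariant closed immersion. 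A short inversion count (illustrated for $n=5$ in the excerpt, where $\sigma$ has exactly $15 = \binom{6}{2}$ inversions) shows $\ell(\sigma) = n(n+1)/2 = \dim \Fl_{n+1}$; since $X_\sigma$ is irreducible of this same dimension, the closed immersion $\zeta$ must be an isomorphism onto $X_\sigma$.

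The main obstacle is in the first step: pinning down the precise formula for $\zeta$ so that correct dimensions, compatibility of the projection conditions with flag inclusions, and $T$-equivariance all hold simultaneously. Naive candidates such as $W_i = F_{i-1} + \iota(V_i)$ for a single embedding $\iota \colon \mathbb{C}^{n+1} \hookrightarrow \mathbb{C}^{2n}$ fail, because the projection condition $\mathrm{pr}_{i+1}(V_i) \subset V_{i+1}$ does not imply $V_i \subset V_{i+1}$, which would be needed in order for $\iota(V_i) \subset \iota(V_{i+1}) + F_i$. The right construction must interleave the $V_i$'s with fixed coordinate subspaces in a way that absorbs the discrepancy $\alpha\, w_{i+1} \in V_i \setminus V_{i+1}$ into the added $F$-part. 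Once this formula is identified, the remaining verifications are essentially rank computations on Grassmannians, and normality of Schubert varieties upgrades bijectivity to isomorphism.
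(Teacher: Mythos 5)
Your overall strategy matches the paper's, but the proposal stops exactly at the point where the real content lies: you never write down the map $\zeta$, and everything else (equivariance, the incidence conditions, the inverse) depends on having it explicitly. The construction the paper uses is the following. For $i=1,\dots,n$ let $U_{n+i}=\langle e_1,\dots,e_{n+i}\rangle\subset W=\mathbb{C}^{2n}$ and define a surjection $\pi_i\colon U_{n+i}\twoheadrightarrow V$ by $\pi_i(e_k)=0$ for $k\le i-1$, $\pi_i(e_k)=f_k$ for $i\le k\le n+1$, and $\pi_i(e_k)=f_{k-n-1}$ for $n+2\le k\le n+i$; then set $W_i:=\pi_i^{-1}(V_i)$. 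This is your ``pair of linear maps'' idea run backwards: instead of choosing two sections $V\to W$ and adding a coordinate subspace, one takes the preimage under a single surjection whose kernel $\langle e_1,\dots,e_{i-1}\rangle$ automatically supplies the fixed part, so that $\dim W_i=i+(i-1)=2i-1$ and the $T$-fixed points match as you require. The identity that makes the projection conditions translate into inclusions is $\pi_{i+1}|_{U_{n+i}}=\pr_i\circ\pi_i$, from which $W_i\subseteq\pi_{i+1}^{-1}(\pr_i(V_i))\subseteq\pi_{i+1}^{-1}(V_{i+1})=W_{i+1}$. Without this (or an equivalent explicit formula) the proposal remains a plan rather than a proof; you correctly diagnosed why the naive $W_i=F_{i-1}+\iota(V_i)$ fails, but diagnosing the failure is not the same as producing the construction that works.

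Two further remarks on the endgame. First, the preimage description identifies the image of $\zeta$ exactly with the set of flags satisfying $\langle e_1,\dots,e_{i-1}\rangle\subseteq W_i\subseteq\langle e_1,\dots,e_{n+i}\rangle$, and the paper shows these containments are precisely the rank conditions $\dim\bigl(W_i\cap\langle e_1,\dots,e_k\rangle\bigr)\ge\#\{l\le 2i-1 \mid \sigma(l)\le k\}$ cutting out $X_\sigma$ (by the criterion in Fulton); the inverse is simply $W_\bullet\mapsto(\pi_i(W_i))_i$. This avoids your dimension count entirely and, importantly, does not take irreducibility or the dimension of $\Fl_{n+1}$ as inputs --- which matters because the paper advertises the theorem as an independent proof of exactly those properties. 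Second, once you have a morphism inverting $\zeta$ on its image you already have an isomorphism onto that image, so the appeal to normality of Schubert varieties is superfluous; normality would only be needed to upgrade a merely bijective morphism.
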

We notice that since the isomorphism $\zeta$ is $T$-equivariant, it is possible to compute the stalks of the local $T$-equivariant intersection cohomology of $\Fl_{n+1}$ by using the parabolic analogue of  Kazhdan-Lusztig polynomials, defined by Deodhar in \cite{Deo87}. This answers a question posed in \cite{CFR2} (and it was the original motivation for this project). Another corollary of the theorem is that the median Genocchi number $h_n=\chi(\Fl_{n+1})$ (see \cite{Feigin_Genocchi}) has another interpretation: it is the number of elements $\tau\in \textrm{Sym}_{2n}^{J}$ which are smaller than $\sigma$ in the (induced) Bruhat order.

As the anonymous referee has kindly pointed out, it would be interesting to analyze  Theorem~\ref{Thm:Intro} in the light of the recent theory of Favourable Modules \cite{FFL_Favourable}. Moreover, She/He also asks what is the interplay between the Borel subgroup $B\subset \tilde{G}$ and the ``degenerate group'' $G^a$ (see \cite{Feigin_Genocchi}), where $\tilde{G}=SL_{2n}$ (resp. $\tilde{G}=Sp_{4n-2}$) and $G=SL_{n+1}$ (resp. $G=Sp_{2n}$). Indeed both $G$ and $\tilde{G}$ act on the corresponding degenerate flag varieties (see Theorem~\ref{Thm:Main} and \ref{Thm:MainC} for the action of $\tilde{G}$ and \cite{Feigin_Genocchi, FFLTypeCVariety} for the action of $G^a$). This will be the subject of a future project.

The paper is organized as follows: in Section~\ref{Sec:Proof} we prove Theorem~\ref{Thm:Main}, in Section~\ref{Sec:Parabolic} we discuss its analogue for partial degenerate flags and in Section~\ref{Sec:Symplectic} we prove the analogous results for type C. In the Appendix we prove that the desingularization of the degenerate flag varieties provided in \cite{FF} coincides with a Bott-Samelson resolution of the Schubert variety $X_{\sigma}$.

\section{Proof of Theorem~\ref{Thm:Main}}\label{Sec:Proof}
Given an integer $n\geq1$, let $\mathcal{F}l_{n+1}^a$ denote the complete degenerate flag variety associated with  $SL_{n+1}$.  In \cite[Theorem~2.5]{Feigin_Genocchi} it is proven that $\mathcal{F}l_{n+1}^a$  can be realized as follows: let $\{f_1,\ldots, f_{n+1}\}$ be an ordered basis of a complex vector space $V\simeq \mathbb{C}^{n+1}$ and let $\pr_{k}:V\rightarrow V$ be the linear projection along the line spanned by $f_k$, i.e. $\pr_k(\sum a_if_i)=\sum_{i\neq k}a_if_i$. Then there is an isomorphism
$$
\mathcal{F}l_{n+1}^a\simeq\{(V_1,\ldots, V_n)\in\prod_{i=1}^n\textrm{Gr}_i(V)|\,\pr_{i+1}(V_i)\subset V_{i+1}\,\forall i=1,\ldots n-1\}.
$$
For convenience of notation, up to an obvious change of basis of $V$, we prefer to realize $\mathcal{F}l_{n+1}^a$ as follows:
\begin{equation}\label{Def:DegFlag}
\mathcal{F}l_{n+1}^a\simeq\{(V_1,\ldots, V_n)\in\prod_{i=1}^n\textrm{Gr}_i(V)|\,\pr_{i}(V_i)\subset V_{i+1}\,\forall i=1,\ldots, n-1\}.
\end{equation}

Let $\{e_1,\ldots,e_{2n}\}$ be an ordered basis of a vector space $W\simeq\mathbb{C}^{2n}$. For any $i=1,2,\ldots, n$, we consider the coordinate subspace $U_{n+i}:=\langle e_1,\ldots, e_{n+i}\rangle\subseteq W$ and the surjection $\xymatrix{\pi_i:U_{n+i}\ar@{->>}[r]&V}$ defined on the basis vectors as
\begin{equation}\label{Def:MapsPi}
\pi_i(e_k)=
\left\{
\begin{array}{ll}
0&\textrm{if }1\leq k\leq i-1,\\
f_k&\textrm{if }i\leq k \leq n+1,\\
f_{k-n-1}&\textrm{if }n+2\leq k\leq n+i.
\end{array}
\right.
\end{equation}
and extended by linearity to $U_{n+i}$. This induces a chain of embeddings of projective varieties
$$
\xymatrix@R=3pt{
\textrm{Gr}_{i}(V)\,\ar@{^{(}->}[r]&\textrm{Gr}_{2i-1}(U_{n+i})\,\ar@{^{(}->}[r]&\textrm{Gr}_{2i-1}(W)\\
U\ar@{|->}[r]&\pi_i^{-1}(U)\ar@{|->}[r]&\pi_i^{-1}(U)
}
$$
We call $\zeta_i:\textrm{Gr}_{i}(V)\hookrightarrow \textrm{Gr}_{2i-1}(W)$ the concatenation of the above maps. We hence have a diagonal embedding
\begin{equation}\label{Eq:DefZeta}
\xymatrix@R=3pt{
\zeta:\prod_{i=1}^n\textrm{Gr}_i(V)
\ar[r]&\prod_{i=1}^n\textrm{Gr}_{2i-1}(W)\\
(V_1, V_2, \cdots, V_n)\ar@{|->}[r]&(\zeta_1(V_1), \zeta_2(V_2), \cdots, \zeta_n(V_n))
}
\end{equation}
Let us show that $\zeta$ restricts to a map  $\Fl_{n+1}\rightarrow SL_{2n}/P$. We consider a point $(V_1, \cdots, V_n)\in\Fl_{n+1}\subset\prod_{i=1}^n\textrm{Gr}_i(V)$; thus, $\pr_i(V_i)\subset V_{i+1}$ for any $i=1,\ldots, n-1$.
We notice that $\pi_{i+1}$ coincides with $\pr_i\circ\pi_i$ on $U_{n+i}\subset U_{n+i+1}$. Denoting by $W_i:=\zeta_i(V_i)$, we get
$$
W_i\subseteq \pi_{i+1}^{-1}\pi_{i+1}(W_i)=\pi_{i+1}^{-1}\pr_i\pi_i(W_i)=\pi_{i+1}^{-1}\pr_i(V_i)\subseteq
\pi_{i+1}^{-1}(V_{i+1})=W_{i+1}.
$$
Therefore $\zeta$ restricts to an embedding $\zeta:\Fl_{n+1}\hookrightarrow SL_{2n}/P$.

We now recall the action of the maximal torus $T\subset SL_{2n}$ on $\Fl_{n+1}$ defined  in \cite[Section~3.1]{CFR}. Let $T_0$ be a maximal torus of $GL_{n+1}(\mathbb{C})$. Up to a change of basis, we assume that $T_0$ acts on $V$ by rescaling the basis vectors $f_i$'s.
This induces a diagonal action of $n$ copies $ T_0^{(1)}\times\cdots\times T_0^{(n)}$ of $T_0$ on the direct sum $V^{(1)}\oplus\cdots \oplus V^{(n)}$ of $n$ copies of $V$. More precisely we endow every copy $V^{(i)}$ with a basis $\{f_1^{(i)},\cdots, f_{n+1}^{(i)}\}$ and the torus acts by rescaling the $f_k^{(i)}$'s.  We consider the linear map $\pr_i:V^{(i)}\rightarrow V^{(i+1)}$ defined on the basis vectors by sending $f_k^{(i)}$ to $f_k^{(i+1)}$ for $k\neq i$, and $f_i^{(i)}$ to zero, and extended by linearity. We define $T_1\subset \prod_{i=1}^n T_0^{(i)}$ to be the maximal subgroup such that each projection $\pr_i:V^{(i)}\rightarrow V^{(i+1)}$ is $T_1$--equivariant. It can be checked that $T_1$ has dimension $2n$ and hence $T_1$ is isomorphic to a maximal torus of $GL_{2n}(\mathbb{C})$. More explicitly, an element $\underline{\lambda}=(\lambda_1, \ldots, \lambda_{2n})\in T_1$ acts by
\begin{equation}\label{eq:TActionFl}
\underline{\lambda}\cdot f_k^{(i)}:=\left\{\begin{array}{cc}\lambda_{k}f_k^{(i)}&\textrm{if }i\leq k\leq n+1\\
\lambda_{n+1+k}f_k^{(i)}&\textrm{if }1\leq k\leq i-1
\end{array}\right.
\end{equation}
Moreover, since the action of $T_0$ on $V$ induces an action on each Grassmannian $\textrm{Gr}_i(V)$, then the  action of $ T_0^{(1)}\times\cdots\times T_0^{(n)}$ on $V^{(1)}\oplus\cdots \oplus V^{(n)}$ induces an action of $T_1$ on the product of Grassmannians $\prod_{i=1}^n\textrm{Gr}_i(V^{(i)})=\prod_{i=1}^n\textrm{Gr}_i(V)$.  Since each projection $\pr_i$ is $T_1$-equivariant, this action descends to an action on $\Fl_{n+1}$. Notice that the action of a point $\underline{\lambda}\in T$ on $\Fl_{n+1}$ coincides with the action of any of its multiples; we hence see that $T:=T_1\cap SL_{2n}$ also acts on $\Fl_{n+1}$.

We now prove that the map $\zeta:\Fl_{n+1}\hookrightarrow SL_{2n}/P$ is $T$-equivariant. The maximal torus $T$ in $SL_{2n}$ acts on $W$ (and hence on each Grassmannian $Gr_k(W)$) by rescaling the basis vectors $e_k$'s : given $\underline{\lambda}=(\lambda_1, \ldots, \lambda_{2n})\in T$
\begin{equation}\label{Eq:TActionW}
\underline{\lambda}e_k:=\lambda_k e_k.
\end{equation}
From \eqref{eq:TActionFl} and \eqref{Eq:TActionW} it follows that each map $\pi_i$ is $T$-equivariant and hence each $\zeta_i$ is $T$-equivariant and hence $\zeta$ itself is $T$-equivariant.

We now describe the image $\zeta(\Fl_{n+1})\simeq\Fl_{n+1}$. We claim that it is given by
\begin{equation}\label{Def:Yn}
Y_n:=\left\{W_1\subset W_2\subset\ldots\subset W_n\Big |
\begin{array}{l}
\bullet \,\text{dim}W_i=2i-1\\
\bullet \, \langle e_1, e_2, \ldots, e_{i-1} \rangle\subset W_i\\
\bullet \, W_i\subset \langle e_1, \ldots, e_{n+i} \rangle
\end{array}
\right\}\subset SL_{2n}/P.
\end{equation}
Indeed, $\zeta(\Fl_{n+1})$ is clearly contained in $Y_n$; viceversa, given a flag $W_\bullet:=(W_1\subset W_2\subset\ldots\subset W_n)$ in $Y_n$, then by definition $\textrm{ker }\pi_i\subset W_i\subset U_{n+i}$ and hence $W_i=\pi_i^{-1}(\pi_i(W_i))=\zeta_i(\pi_i(W_i))$. It follows that $W_\bullet=\zeta((\pi_1(W_1), \ldots, \pi_n(W_n)))\in\textrm{Im }\zeta$. It remains to show that $(\pi_1(W_1), \ldots, \pi_n(W_n))\in \Fl_{n+1}$. This is immediately verified as follows: $\pr_i(\pi_i(W_i))=\pi_{i+1}(W_{i})\subseteq \pi_{i+1}(W_{i+1})$,
for any $i=1,\ldots, n-1$.

In order to show that $Y_n\cong X_{\sigma}$, we observe that for any $i=1, \ldots, n$ we have
$$
\#\{l\leq 2i-1 \mid\sigma(l)\leq k \}=\left\{
\begin{array}{cll}
k & \textrm{ if }&1\leq k\leq i-1,\\
i-1 & \textrm{ if }&i-1\leq k\leq n,\\
i-1+k-n & \textrm{ if }&n+1\leq k\leq n+i,\\
2i-1 & \textrm{ if }&n+i\leq k\leq 2n.
\end{array}
\right.
$$
It follows that for a partial flag $W_\bullet\in SL_{2n}/P$, condition
$
\langle e_1,e_2\ldots, e_{i-1}\rangle\subseteq W_i\subseteq \langle e_1,e_2\ldots, e_{n+i}\rangle $
is equivalent to
\begin{equation}\label{Eq:SchubCond}
 \textrm{dim}(W_i\cap \langle e_1, e_2, \ldots, e_{k} \rangle)\geq \#\{ l\leq 2i-1 \mid \sigma(l)\leq k\}
\end{equation}
for any $i=1, \ldots, n$ and $k=1, \ldots, 2n$. By \cite[Corollary of the proof of Proposition 7, \S 10.5]{Fult}, $X_{\sigma}$ is precisely the locus of partial flags in $SL_{2n}/P$ satisfying \eqref{Eq:SchubCond}. This concludes the proof of Theorem~\ref{Thm:Main}.

\begin{rem}
Theorem~\ref{Thm:Main} and its proof have a nice and clean interpretation in terms of quivers, in the spirit of \cite{CFR}, \cite{CFR2} and \cite{CFR3}.
\end{rem}

\section{Parabolic case}\label{Sec:Parabolic}

In this section we discuss the parabolic analogue of Theorem~\ref{Thm:Main}. Recall the vector space $V\simeq\mathbb{C}^{n+1}$ with basis $\{f_1,\ldots, f_{n+1}\}$ and let $\mathbf{d}=(d_i)_{i=1}^s$ be a collection of positive integers $1\leq d_1<d_2<\ldots <d_s\leq n$. For any pair of indices $1\leq i< j\leq n$ we consider the linear map $\pr_{i,j}:V\rightarrow V$ defined by $\pr_{i,j}=\pr_{j-1}\circ\ldots\circ \pr_{i+1}\circ \pr_i$ where $\pr_i$ is the projection along $f_i$ as before. Then, following \cite[Theorem~2.5]{Feigin_Genocchi}, the partial degenerate flag variety $\mathcal{F}l^a_\mathbf{d}$ is given by
$$
\mathcal{F}l^a_\mathbf{d}\simeq\{(V_1,\cdots, V_s)\in\prod_{l=1}^s\textrm{Gr}_{d_l}(V)|\, \\pr_{d_l,d_{l+1}}(V_l)\subset V_{l+1}\}.
$$
The maximal torus $T\subset SL_{2n}$ acts  on $\mathcal{F}l^a_\mathbf{d}$, in a similar way as for complete flags (see \cite{CFR}). Let $\lambda:=\omega_{2d_1-1}+\omega_{2d_2-1}+\ldots+\omega_{2d_s-1}$ and let $P=P_\lambda$ be the corresponding parabolic subgroup in $SL_{2n}$. The variety $SL_{2n}/P$ is naturally identified with the variety of partial flags $W_1\subset\cdots \subset W_{s}\subset W$ such that $\textrm{dim }W_i=2d_i-1$ ($i=1,2,\ldots, s$).
We introduce the sets $K:=\{1,2,\ldots, 2n\}\setminus\{2d_i-1|\,i=1, 2, \ldots, s\}$, $J:=\{(k,k+1)|\,k\in K\}$, and the subgroup $W_J$ of $\textrm{Sym}_{2n}$ generated by $J$.  We have the Bruhat decomposition
$$
SL_{2n}/P\simeq \coprod_{\tau}B\tau P/P
$$
where this time $\tau$ runs over the set of minimal length representatives for the cosets in  $\textrm{Sym}_{2n}/ W_J$. This set corresponds to the permutations $\tau\in \textrm{Sym}_{2n}$ such that $\tau(2d_i)<\tau(2d_i+1)<\cdots<\tau(2d_{i+1}-1)$. We denote by $X_\tau=\overline{B\tau P/P }$ the corresponding Schubert variety.  Let $\sigma_\mathbf{d}$ be the minimal length representative of the coset $\sigma_n W_J\in \textrm{Sym}_{2n}/ W_J$ ($\sigma_n$ is defined in \eqref{Def:Sigma}); explicitly, $\sigma_\mathbf{d}$ is given by
\begin{equation}\label{Def:SigmaD}
\sigma_\mathbf{d}(k)=\left\{
\begin{array}{cl}
k-d_i&\textrm{ if } k\in\{2d_i,\ldots, d_i+d_{i+1}-1\},\\
n+1+k-d_{i+1}&\textrm{ if  }k\in\{d_i+d_{i+1},\ldots, 2d_{i+1}-1\},
\end{array}
\right.
\end{equation}
with the conventions $d_0:=0$ and $d_{s+1}:=n+1$. For example, for $n=8$ and $\mathbf{d}=(2,5,7)$, we have
$$
\sigma_\mathbf{d}=
\left(
\begin{array}{cccccccccccccccc}
1&2&3&4&5&6&7&8&9&10&11&12&13&14&15&16\\
1&9&10&2&3&4&11&12&13&5&6&14&15&7&8&16
\end{array}
\right)
$$
Notice that for $\mathbf{d}=(1,2,\ldots, n)$,  the permutations $\sigma_\mathbf{d}$ and $\sigma_n$ \eqref{Def:Sigma} coincide.
\begin{thm}\label{Thm:Partial}
There exists a T-equivariant isomorphism
$$
\xymatrix{
\zeta: \mathcal{F}l^a_\mathbf{d}\ar^(.35)\simeq[r]&X_{\sigma_\mathbf{d}}\subset SL_{2n}/P_\lambda.
}
$$
\end{thm}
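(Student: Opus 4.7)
The plan is to mimic the proof of Theorem~\ref{Thm:Main} step-by-step, with the indices $i=1,\ldots, n$ replaced by the selected $d_1, \ldots, d_s$. For each $l = 1, \ldots, s$ I would define the surjection $\pi_{d_l}: U_{n+d_l} \twoheadrightarrow V$ by the formula \eqref{Def:MapsPi} with $i$ replaced by $d_l$, and the induced closed embedding $\zeta_l: \textrm{Gr}_{d_l}(V) \hookrightarrow \textrm{Gr}_{2d_l - 1}(W)$ sending $V_l \mapsto \pi_{d_l}^{-1}(V_l)$. Assembling these for $l = 1, \ldots, s$ produces
$$\zeta: \prod_{l=1}^s\textrm{Gr}_{d_l}(V) \longrightarrow \prod_{l=1}^s \textrm{Gr}_{2d_l - 1}(W).$$

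Next, I would verify that $\zeta$ restricts to a $T$-equivariant embedding $\mathcal{F}l^a_{\mathbf{d}} \hookrightarrow SL_{2n}/P_\lambda$. The key algebraic fact, to be checked on basis vectors, is the factorization $\pi_{d_{l+1}}|_{U_{n+d_l}} = \pr_{d_l, d_{l+1}} \circ \pi_{d_l}$. Combined with the defining relation $\pr_{d_l, d_{l+1}}(V_l) \subseteq V_{l+1}$ of $\mathcal{F}l^a_{\mathbf{d}}$, this yields the chain
$$\zeta_l(V_l) = \pi_{d_l}^{-1}(V_l) \subseteq \pi_{d_{l+1}}^{-1}(\pr_{d_l, d_{l+1}}(V_l)) \subseteq \pi_{d_{l+1}}^{-1}(V_{l+1}) = \zeta_{l+1}(V_{l+1}),$$
so the image lies in $SL_{2n}/P_\lambda$. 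The $T$-equivariance follows verbatim from the argument in Section~\ref{Sec:Proof}, since each $\pi_{d_l}$ is $T$-equivariant.

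I would then describe the image in the spirit of \eqref{Def:Yn} as
$$Y_{\mathbf{d}} := \Big\{ W_1 \subset \cdots \subset W_s \,\Big|\, \textrm{dim }W_l = 2d_l - 1,\ \langle e_1, \ldots, e_{d_l - 1}\rangle \subseteq W_l \subseteq \langle e_1, \ldots, e_{n+d_l}\rangle \Big\}.$$
One inclusion is built into the definition of $\zeta_l$. For the reverse, any $W_\bullet \in Y_{\mathbf{d}}$ satisfies $\textrm{ker }\pi_{d_l} \subseteq W_l \subseteq U_{n+d_l}$, hence $W_l = \pi_{d_l}^{-1}(\pi_{d_l}(W_l))$, and the identity $\pi_{d_{l+1}}(W_l) = \pr_{d_l,d_{l+1}}(\pi_{d_l}(W_l))$ shows that the tuple $(\pi_{d_l}(W_l))_l$ defines a point of $\mathcal{F}l^a_{\mathbf{d}}$ whose $\zeta$-image is $W_\bullet$.

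Finally, to identify $Y_{\mathbf{d}}$ with $X_{\sigma_{\mathbf{d}}}$, I would invoke \cite[Corollary of Proposition 7, \S 10.5]{Fult}: the Schubert variety $X_{\sigma_{\mathbf{d}}}$ is cut out by the rank inequalities $\textrm{dim}(W_l \cap \langle e_1, \ldots, e_k\rangle) \ge r_l(k)$, where $r_l(k) := \#\{l' \le 2d_l - 1 \mid \sigma_{\mathbf{d}}(l') \le k\}$. The main obstacle is the combinatorial verification that these inequalities are equivalent to the coordinate subspace containments defining $Y_{\mathbf{d}}$. Unlike the complete case, $\sigma_{\mathbf{d}}$ from \eqref{Def:SigmaD} has a piecewise structure indexed by the two families of intervals $[2d_i, d_i + d_{i+1} - 1]$ and $[d_i + d_{i+1}, 2d_{i+1} - 1]$, so one must carefully split the count of $\{l' \le 2d_l - 1 : \sigma_{\mathbf{d}}(l') \le k\}$ according to the position of $k$ relative to the thresholds $d_l - 1$, $n+1$, and $n+d_l$. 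Once this bookkeeping is carried out, the inequalities saturate precisely on $Y_{\mathbf{d}}$, completing the proof.
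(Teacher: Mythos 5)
Your proposal is correct and follows essentially the same route as the paper: the paper defines $\zeta$ by $W_l:=\pi_{d_l}^{-1}(V_l)$, checks that the image is exactly the set of partial flags with $\langle e_1,\ldots,e_{d_l-1}\rangle\subseteq W_l\subseteq\langle e_1,\ldots,e_{n+d_l}\rangle$, and finishes by the rank-condition argument of Theorem~\ref{Thm:Main}. The only remark worth adding is that the final ``bookkeeping'' you anticipate is unnecessary: since $\sigma_{\mathbf{d}}\in\sigma_nW_J$ and each $\{1,\ldots,2d_l-1\}$ is a union of $W_J$-blocks, the counts $\#\{l'\le 2d_l-1\mid\sigma_{\mathbf{d}}(l')\le k\}$ coincide with those already computed for $\sigma_n$ at $i=d_l$.
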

\begin{proof}
Recall the vector space $W\simeq\mathbb{C}^{2n}$ with basis $\{e_1, \ldots, e_{2n}\}$ and the surjections $\xymatrix@1{\pi_i:U_{n+i}\ar@{->>}[r]&V}$ defined in \eqref{Def:MapsPi} for $i=1,2,\ldots, n$.  The map $\zeta$ is defined by sending $(V_1,\cdots, V_s)\in\mathcal{F}l^a_\mathbf{d}$ to the tuple $(W_1,\cdots, W_s)\in SL_{2n}/P_\lambda$ given by
$W_i:=\pi_{d_i}^{-1}(V_i)$. It can be checked in the same way as in Section~\ref{Sec:Proof}, that the image of $\zeta$ consists of partial flags $W_1\subset W_2\subset\ldots\subset W_s$ such that $\text{dim}W_i=2d_i-1$ and $\langle e_1, e_2, \ldots, e_{d_i-1} \rangle\subseteq W_i \subseteq \langle e_1, \ldots, e_{n+d_i} \rangle$. The proof now finishes as for Theorem~\ref{Thm:Main}.
\end{proof}

\section{Symplectic case}\label{Sec:Symplectic}
In this section we state and prove the analogue of Theorem~\ref{Thm:Main} in the case of the symplectic group. In order to fix notation, we start with a brief overview about symplectic flag varieties (see e.g. \cite[Chapter 6]{LR}).
We consider a positive integer $n\geq1$  and a complex vector space $W\simeq \mathbb{C}^{2n}$ of dimension $2n$ with ordered basis $\{e_1, e_2, \ldots, e_{2n}\}$. We fix the bilinear form $b_W[\cdot, \cdot]$ on $W$ given by the following $2n\times 2n$ matrix
\begin{equation}\label{Eq:DefBilFormE}
E=\left(
\begin{array}{cc}
0&J\\
-J&0\\
\end{array}
\right)\end{equation}
where $J$ is  $n\times n$ anti-diagonal matrix with entries $(1,1, \ldots, 1)$, as usual. In particular the form is non--degenerate and  skew-symmetric. Moreover $e_k^\ast=e_{2n+1-k}$, for $k=1,\cdots, 2n$. The group $\textrm{Sp}_{2n}$ consists of those matrices $A$ in $SL_{2n}$ which leave invariant the given form, i.e. $b_W[Av,Aw]=b_W[v,w]$ for every $v,w\in W$. More explicitly, we consider the involution $\iota:\textrm{SL}_{2n}\rightarrow \textrm{SL}_{2n}$ which sends a matrix $A$ to the matrix $E({}^t A)^{-1} E^{-1}$; then the group $\textrm{Sp}_{2n}$ consists of $\iota$--invariant matrices.  The advantage of choosing the form as above is that the intersection $B\cap \textrm{Sp}_{2n}=B^\iota\subset \textrm{SL}_{2n}$ consisting of $\iota$--fixed upper triangular matrices, is indeed a Borel subgroup of $\textrm{Sp}_{2n}$ whose maximal torus is precisely the subgroup $T^\iota=T\cap\textrm{Sp}_{2n}$ of $\iota$--invariant diagonal matrices.

The parabolic subgroup $P=P_{\omega_1+\cdots+\omega_{2n-1}}$ of $SL_{2n}$ considered in Section~\ref{Sec:Intro} is mapped into itself by $\iota$ and the group of fixed points $Q:=P^\iota=P\cap \textrm{Sp}_{2n}$ is a parabolic subgroup of $Sp_{2n}$.
The projective variety $\textrm{Sp}_{2n}/Q$ can be described as follows: for a subspace $U\in\textrm{Gr}_k(W)$ we denote by $U^\perp\in \textrm{Gr}_{2n-k}(W)$ the orthogonal space of $U$ in $W$. The map
\begin{equation}\label{Def:iotaK}
\iota_k: \textrm{Gr}_k(W)\rightarrow \textrm{Gr}_{2n-k}(W):\; U\mapsto U^\perp
\end{equation}
is an isomorphism of projective varieties.  The variety $\textrm{SL}_{2n}/P$ sits inside the product $\prod_{i=1}^{n}\textrm{Gr}_{2i-1}(W)$ and we consider the involution (still denoted by $\iota$)
\begin{equation}\label{Def:iota}
\iota:=\prod_{i=1}^n\iota_{2i-1}: \prod_{i=1}^{n}\textrm{Gr}_{2i-1}(W)\rightarrow\prod_{i=1}^{n}\textrm{Gr}_{2i-1}(W)
\end{equation}
The involution $\iota$ restricts to an involution on $\textrm{SL}_{2n}/P$ and the variety $\textrm{Sp}_{2n}/Q=(\textrm{SL}_{2n}/P)^\iota$ consists of $\iota$--invariant flags.

Moreover, the involution $\iota$ (on $SL_{2n}$) induces an involution on the symmetric group $\textrm{Sym}_{2n}$ as follows: it sends $\tau\mapsto \iota(\tau)$, where $\iota(\tau)(r):=2n+1-\tau(2n+1-r)$, for $r=1, \ldots, 2n$. The Weyl group of $Sp_{2n}$ coincides with the subgroup $\textrm{Sym}_{2n}^{\iota}$ of $\iota$--fixed elements.  The element $\sigma_n\in \textrm{Sym}_{2n}$ defined in \eqref{Def:Sigma} is easily seen to be fixed by $\iota$ and it hence belongs to the Weyl group of $\textrm{Sp}_{2n}$.
The left action of $B^{\iota}$ on $Sp_{2n}/Q$ induces the Bruhat decomposition:
$$Sp_{2n}/Q=\coprod_{\tau\in(\textrm{Sym}_{2n}^J)^{\iota}}B^{\iota}\tau Q/Q.$$
Each Schubert cell $B^{\iota} \tau Q/Q$ coincides with the set of $\iota$-fixed points $\mathcal{C}_{\tau}^{\iota}$ of the  Schubert cell $\mathcal{C}_{\tau}$ of $SL_{2n}$ and the same holds for  each Schubert variety, $Z_{\tau}=\overline{B^{\iota}\tau\ Q/Q}=X_{\tau}^{\iota}$ (cf. \cite[Proposition 6.1.1.2]{LR}).

We now state the analogue of Theorem~\ref{Thm:Main} in type C. We denote by $\textrm{Sp}\mathcal{F}l_{2m}^a$ the complete degenerate flag variety associated with $\textrm{Sp}_{2m}$ (see below for a definition).

\begin{thm}\label{Thm:MainC}
There exists a $T^{ \iota}$-equivariant isomorphism of projective varieties
\begin{equation}\label{Eq:MapC}
\xymatrix{
\zeta:&\textrm{Sp}\mathcal{F}l_{2m}^a\ar^(.40){\simeq}[r]& X_{\sigma_n}^\iota\subset \textrm{Sp}_{2n}/Q
}
\end{equation}
where $n:=2m-1$, $\sigma_n$ is the permutation given in \eqref{Def:Sigma} and $Q=P^\iota$ as above.
\end{thm}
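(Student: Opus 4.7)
The strategy is to reduce the statement to Theorem~\ref{Thm:Main} by realizing both sides of \eqref{Eq:MapC} as fixed loci of compatible involutions. Following \cite[Theorem~1.1]{FFLTypeCVariety}, the symplectic degenerate flag variety $\textrm{Sp}\mathcal{F}l_{2m}^a$ admits a linear algebra description as a closed subvariety of $\mathcal{F}l_{2m}^a$ (so $n+1=2m$) consisting of those flags $(V_1,\ldots,V_{2m-1})$ which are self-dual with respect to a suitable symplectic form $b_V$ on $V\simeq\mathbb{C}^{2m}$, in the sense that $V_i^{\perp_V}=V_{2m-i}$ for every $i$. In particular, $\textrm{Sp}\mathcal{F}l_{2m}^a$ is cut out inside $\mathcal{F}l_{2m}^a$ as the fixed locus of a natural involution $\tilde\iota$, dual to the involution $\iota$ on $\textrm{SL}_{2n}/P$ defined in \eqref{Def:iota}.

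The core step is to show that the isomorphism $\zeta:\mathcal{F}l_{2m}^a\to X_{\sigma_n}$ of Theorem~\ref{Thm:Main} intertwines $\tilde\iota$ with $\iota$. First, one chooses the form $b_V$ on $V$ so that the pairing $b_W$ on $W$, restricted through the surjections $\pi_i$ of \eqref{Def:MapsPi}, induces precisely $b_V$; this pins down the coefficients $b_V[f_j,f_{2m+1-j}]$ from the identifications made in \eqref{Def:MapsPi}. Once the forms are matched, one verifies the identity
$$
\pi_i^{-1}(V_i)^{\perp_W}=\pi_{n+1-i}^{-1}(V_i^{\perp_V})
$$
for every $V_i\in\textrm{Gr}_i(V)$ and every $i=1,\ldots,n$. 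Granting this identity, a flag $V_\bullet$ with $V_i^{\perp_V}=V_{2m-i}$ is sent by $\zeta$ to a flag $W_\bullet$ with $W_i^{\perp_W}=W_{n+1-i}$, which is exactly the $\iota$-invariance condition defining $\textrm{Sp}_{2n}/Q=(\textrm{SL}_{2n}/P)^\iota$.

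I expect the main technical hurdle to be the verification of this compatibility of bilinear forms: since $\pi_i$ has a nontrivial kernel that depends on $i$, one must check by direct computation on the explicit basis $\{e_k\}$ that the orthogonal of $\pi_i^{-1}(V_i)$ in $W$ coincides on the nose with $\pi_{n+1-i}^{-1}(V_i^{\perp_V})$, not merely up to dimension. Once this is done, restricting $\zeta$ to $\tilde\iota$-fixed points yields an isomorphism from $\textrm{Sp}\mathcal{F}l_{2m}^a$ onto $\zeta(\mathcal{F}l_{2m}^a)^\iota=X_{\sigma_n}^\iota$, where $X_{\sigma_n}$ is $\iota$-stable because $\sigma_n\in \textrm{Sym}_{2n}^\iota$, and $X_{\sigma_n}^\iota$ is precisely the symplectic Schubert variety $Z_{\sigma_n}$ in $\textrm{Sp}_{2n}/Q$. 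Finally, $T^\iota$-equivariance is automatic: since $T^\iota\subset T$ and Theorem~\ref{Thm:Main} provides $T$-equivariance of $\zeta$, passing to $\iota$-fixed points preserves equivariance under the fixed torus, completing the proof.
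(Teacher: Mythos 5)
Your proposal is correct and follows essentially the same route as the paper: both sides are realized as $\iota$-fixed loci ($\textrm{Sp}\mathcal{F}l_{2m}^a=(\mathcal{F}l_{n+1}^a)^\iota$ and $X_{\sigma_n}^\iota$), and everything reduces to the identity $\pi_i^{-1}(V_i)^{\perp_W}=\pi_{n+1-i}^{-1}(V_i^{\perp_V})$, which the paper deduces from the form compatibility $b_V[\pi_{m-i}(v),\pi_{m+i}(w)]=b_W[v,w]$ proved by induction on $i$ using the adjointness $\pr_i^\ast=\pr_{i^\ast}$ of the projections. The one caveat is that the form on $V$ making this work is the shifted one with $f_k^\ast=f_{2m-1-k}$ for $k\leq 2m-2$ and $f_{2m-1}^\ast=f_{2m}$, not the standard anti-diagonal pairing $f_j\leftrightarrow f_{2m+1-j}$ that your notation suggests.
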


In Section~\ref{Thm:MainC} we prove Theorem~\ref{Thm:MainC} and in Section~\ref{Sec:ParabolicTypeC} we state and prove its parabolic analogue.

\subsection{Proof of Theorem~\ref{Thm:MainC}}\label{Sec:TypeCProof}
Fix an integer $m\geq1$, a complex vector space $V$ of dimension $2m$ with basis $\{f_1,\cdots, f_{2m}\}$ and a non-degenerate skew-symmetric bilinear form $b_V[\cdot, \cdot]$ on $V$ such that \begin{equation}\label{Eq:DefBilFormV}
f_{k}^\ast= \left\{\begin{array}{rcl}f_{2m-1-k} &\textrm{if} & 1\leq k\leq 2m-2,\\ f_{2m}&\textrm{if} & k=2m-1,\end{array}\right.
\end{equation}
so that $V=\langle f_1,\ldots, f_{m-1}, f_{m-1}^\ast,\ldots, f_1, f_{2m-1}, f_{2m-1}^\ast\rangle$. We define $n:=2m-1$, so that $V$ has dimension $n+1$ as in the previous sections.  The degenerate flag variety $\mathcal{F}l^a_{n+1}$ sits inside the product of Grassmannians $\prod_{i=1}^{n}\textrm{Gr}_i(V)$.  It can be checked that the map $\iota=\prod_{i} \iota_i:\prod_{i=1}^{n}\textrm{Gr}_i(V)\rightarrow \prod_{i=1}^{n}\textrm{Gr}_i(V)$ (where $\iota_i$ is defined in \eqref{Def:iotaK}) restricts to a map from  $\mathcal{F}l^a_{n+1}$ to itself, and the fixed points form the degenerate symplectic flag variety associated with $\textrm{Sp}_{2m}$ \cite[Proposition~4.7]{FFLTypeCVariety}, i.e.
\begin{equation}
\textrm{Sp}\mathcal{F}l_{2m}^a=(\mathcal{F}l_{n+1}^a)^\iota.
\end{equation}
Thus Theorem~\ref{Thm:MainC} will follow once we show that the diagram
\begin{equation}\label{Eq:CommDiagMain}
\xymatrix{
\mathcal{F}l_{n+1}^a\ar^\iota[r]\ar_\zeta[d]&\mathcal{F}l_{n+1}^a\ar^\zeta[d]\\
X_{\sigma_n}\ar^\iota[r]&X_{\sigma_n}
}
\end{equation}
commutes, where the vertical arrows denote the T-equivariant  isomorphism provided by Theorem~\ref{Thm:Main} and the horizontal arrow in the bottom is induced by the involution \eqref{Def:iota}. In  Section~\ref{Sec:Proof} we proved that the isomorphism $\zeta$ is the restriction of the map $\zeta:~\prod_{i=1}^{n}\textrm{Gr}_i(V)\rightarrow\prod_{i=1}^{n}\textrm{Gr}_{2i-1}(W)$ given in \eqref{Eq:DefZeta}. In order to prove \eqref{Eq:CommDiagMain}, it is  enough to show that the following diagram
\begin{equation}\label{Eq:DiagC}
\xymatrix{
\prod_{i=1}^{n}\textrm{Gr}_i(V)\ar^\iota[r]\ar_\zeta[d]& \prod_{i=1}^{n}\textrm{Gr}_i(V)\ar^\zeta[d]\\
\prod_{i=1}^{n}\textrm{Gr}_{2i-1}(W)\ar^\iota[r]&\prod_{i=1}^{n}\textrm{Gr}_{2i-1}(W)
}
\end{equation}
commutes. We therefore need to check that for every point $(V_i)_{i=1}^n\in\prod_{i=1}^{n}\textrm{Gr}_i(V)$ and for every $i=0,\ldots, m-1$, we have
\begin{equation}\label{Eq:Perp}
\zeta_{m-i}(V_{m-i})^\perp=\zeta_{m+i}(V_{m-i}^\perp).
\end{equation}
Recall that for every $i=1,\ldots, n$, $\zeta_i(V_i):=\pi_i^{-1}(V_i)$, where $\pi_i:U_{n+i}\rightarrow V$ is the map given in \eqref{Def:MapsPi} and $U_{n+i}$ is the coordinate subspace of W generated by $e_1,e_2,\ldots, e_{n+i}$.
We prove the following (stronger) statement: for every $i=0,\ldots, m-1$, $v\in U_{n+m-i}$ and $w\in U_{n+m+i}$ we have
\begin{equation}\label{Eq:MetricPreserving}
b_V[\pi_{m-i}(v), \pi_{m+i}(w)]=b_W[v,w].
\end{equation}
It is  easy to verify that \eqref{Eq:MetricPreserving} implies \eqref{Eq:Perp}: Indeed  $\textrm{dim }\zeta_{m-i}(V_{m-i})^\perp=2m+2i-1=\textrm{dim }\zeta_{m+i}(V_{m-i}^\perp)$ and \eqref{Eq:MetricPreserving} implies at once that $\zeta_{m+i}(V_{m-i}^\perp)\subseteq\zeta_{m-i}(V_{m-i})^\perp$. We will prove \eqref{Eq:MetricPreserving} by induction on $i\geq0$. For $i=0$ we need to show that $\pi_m:U_{n+m}\rightarrow V$ is a map of symplectic spaces, i.e. for every $v,w\in U_{n+m}$ we have $b_V[\pi_m(v),\pi_m(w)]=b_W[ v,w]$. This follows easily from the definitions: Indeed, for a given $k=1,\ldots, n$, the coordinate vector subspace $U_{n+k}$ of $W$ is given by $U_{n+k}=\langle e_1,\ldots, e_n, e_n^\ast, \ldots, e_{n-k+1}^\ast\rangle$. In particular, $U_{n+m}$ is generated by $e_1,\ldots, e_m,\ldots, e_n, e_n^\ast, \ldots, e_m^\ast$ and  $\pi_m$ is defined on the symplectic basis as follows
$$
\begin{array}{cc}
\pi_m(e_k)=\left\{
\begin{array}{cc}
0&\textrm{ if }1\leq k\leq m-1,\\
f_{n-k}^\ast&\textrm{ if } m\leq k\leq n-1,\\
f_{n}&\textrm{ if } k=n,
\end{array}
\right.\!\!\!\!\!
&
\pi_m(e_k^\ast)=\left\{
\begin{array}{cc}
f_{n-k}&\textrm{ if } m\leq k\leq n-1,\\
f_{n}^\ast&\textrm{ if } k=n.
\end{array}
\right.
\end{array}
$$

We hence assume that \eqref{Eq:MetricPreserving} is true for $i\geq0$ and we prove it for $i+1$.  In view of \eqref{Eq:DefBilFormV},  the map $\pr_{m-1+k}:V\rightarrow V$ ($1\leq k\leq m-1$) is the projection along the line spanned by the basis vector $f_{m-k}^\ast$ and we denote $\pr_{(m-k)^\ast}:=\pr_{m-1+k}$.  We notice that the adjoint map $\pr_i^\ast$ of $\pr_i:V\rightarrow V$ is $\pr_{i^\ast}$, i.e.
\begin{equation}\label{Eq:PrAdjoint}
b_V[ \pr_i(v), v']=b_V[ v,\pr_{i^\ast}(v')]
\end{equation}
for every $v,v'\in V$.
We have  already observed that $\pi_{i+1}:U_{n+i+1}\rightarrow V$ restricted to $U_{n+i}\subset U_{n+i+1}$ coincides with $\textrm{pr}_i\circ\pi_i$ and, using the notation just introduced, this means that  the following diagram
\begin{equation}\label{Eq:CommDiag}
\xymatrix@C=10pt{
V\ar^{\pr_1}[r]&V\ar^{\pr_2}[r]&\cdots\ar[r]&V\ar^{\pr_{m-1}}[r]&V\ar^{\pr_{m-1}^\ast}[r]&V\ar[r]&\ldots\ar^{\pr_2^\ast}[r]&V\ar^{\pr_1^\ast}[r]&V\\
U_{n+1}\ar^{\pi_1}[u]\ar[r]&U_{n+2}\ar^{\pi_2}[u]\ar[r]&\cdots\ar[r]&U_{n+m-1}\ar^{\pi_{m-1}}[u]\ar[r]&U_{n+m}\ar^{\pi_m}[u]\ar[r]&U_{n+m+1}\ar_{\pi_{m+1}}[u]\ar[r]&\ldots\ar[r]&U_{2n-1}\ar_{\pi_{n-1}}[u]\ar[r]&U_{2n}\ar_{\pi_n}[u]
}
\end{equation}
commutes (the chain of horizontal arrows in the bottom row is given by the canonical embeddings $U_{n+i}\hookrightarrow U_{n+i+1}$).

We can now prove \eqref{Eq:MetricPreserving}. We write a non-zero element $w\in U_{n+m+(i+1)}$ as $w=\mu e_{n-m-i}^\ast+w'$ for some $w'\in U_{n+m+i}$ and some $\mu\in\CC$;  given $v\in U_{n+m-(i+1)}$ we need to compute $b_V[ \pi_{m-(i+1)}(v), \pi_{m+(i+1)}(w) ]$. Let us first deal with the case when $w'=0$, i.e. $w=\mu e_{n-m-i}^\ast$: we have
\begin{eqnarray*}
b_V[ \pi_{m-(i+1)}(v), \pi_{m+(i+1)}(w) ]&=&\mu\, b_V[ \pi_{m-(i+1)}(v), \pi_{m+(i+1)}(e_{n-m-i}^\ast) ]\\
&=&\mu\, b_V[ \pi_{m-(i+1)}(v), f_{m+i} ]\\
&=&\mu\, b_V[ \pi_{m-(i+1)}(v), f_{m-1-i}^\ast ].
\end{eqnarray*}
By writing $v=\sum_k c_k e_k$ in the symplectic basis $\{e_k\}$, since $\pi_{m-i-1}(e_{n-m-i})=f_{n-m-i}=f_{m-1-i}$, we get
\begin{equation}\label{Eq:W'=0}
b_V[ \pi_{m-(i+1)}(v), \pi_{m+(i+1)}(w) ]= \mu c_{n-m-i}=b_W[ v, \mu e_{n-m-i}^\ast]=b_W[ v,w].
\end{equation}
We now consider the case when $w'\neq0$. In view of \eqref{Eq:PrAdjoint}, \eqref{Eq:CommDiag}, \eqref{Eq:W'=0} and the induction hypothesis we get:
\begin{eqnarray*}
b_V[ \pi_{m-(i+1)}(v), \pi_{m+(i+1)}(w) ]&&\\=b_W[ v, \mu e_{n-m-i}^\ast ]+b_V[ \pi_{m-(i+1)}(v), \pi_{m+(i+1)}(w') ]&&\\
=b_W[ v, \mu e_{n-m-i}^\ast ]+b_V[ \pi_{m-(i+1)}(v), \pr_{m-i-1}^\ast\circ \pi_{m+i}(w') ]&&\\
=b_W[ v, \mu e_{n-m-i}^\ast ]+b_W[ \pr_{m-i-1}\circ\pi_{m-i-1}(v), \pi_{m+i}(w') ]&&\\
=b_W[ v, \mu e_{n-m-i}^\ast ]+b_V[ \pi_{m-i}(v), \pi_{m+i}(w') ]&&\\
=b_W[ v, \mu e_{n-m-i}^\ast ]+b_W[ v, w' ]&&\\
=b_W[ v, w ]&&
\end{eqnarray*}
as desired.

\subsection{Parabolic case}\label{Sec:ParabolicTypeC}
We conclude by discussing the parabolic version of Theorem  \ref{Thm:MainC}, which is the type~C analogue  of Theorem~\ref{Thm:Partial}. Let $m\geq 1$ be a positive integer as in Section~\ref{Sec:TypeCProof},
and let $\mathbf{d}=(d_i)_{i=1}^s$ be a collection of positive integers $1\leq d_1<d_2<\ldots<d_s\leq 2m$  preserved by the map $d_i\mapsto 2m-d_i$. The involution $\iota=\prod\iota_i:\prod_{i=1}^{s}\textrm{Gr}_{d_i}(V)\rightarrow \prod_{i=1}^{s}\textrm{Gr}_{d_i}(V)$ is hence well-defined and restricts to a map from  $\mathcal{F}l^a_\mathbf{d}$ to itself.  The fixed points form the partial degenerate symplectic flag variety $\textrm{Sp}\mathcal{F}_\mathbf{d}^a$ \cite[Proposition~4.9]{FFLTypeCVariety}, i.e. $\textrm{Sp}\mathcal{F}_\mathbf{d}^a=(\mathcal{F}l^a_\mathbf{d})^\iota$.

Let $\lambda$ and $P_\lambda$ as in Section~\ref{Sec:Parabolic}, so that $X_{\sigma_\mathbf{d}}\subset SL_{2m}/P_{\lambda}$. Let $Q:=P_\lambda^\iota$ be the parabolic subgroup of $\textrm{Sp}_{2m}$. The projective variety $\textrm{Sp}_{2m}/Q$ coincides with the $\iota$-fixed points of $\textrm{SL}_{2m}/P_\lambda$, i.e $\textrm{Sp}_{2m}/Q=(\textrm{SL}_{2m}/P_\lambda)^\iota$. Moreover, since the permutation $\sigma_\mathbf{d}$ is fixed by $\iota$, the corresponding Schubert variety in $\textrm{Sp}_{2m}/Q$ is the variety of $\iota$-fixed points $X_{\sigma_\mathbf{d}}^\iota$ of $X_{\sigma_\mathbf{d}}$.
From the commutativity of Diagram~\eqref{Eq:DiagC}, together with Theorem \ref{Thm:Partial}, we obtain the following result.

\begin{thm}\label{Thm:PartialC}
There exists a $T^{ \iota}$-equivariant isomorphism of projective varieties
$$
\xymatrix{
\zeta:&\textrm{Sp}\mathcal{F}_\mathbf{d}^a\ar^(.35){\simeq}[r]& X_{\sigma_\mathbf{d}}^\iota\subset \textrm{Sp}_{2n}/Q
}
$$
where $\sigma_{\textbf{d}}$ is the permutation given in \eqref{Def:SigmaD}.
\end{thm}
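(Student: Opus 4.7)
The plan is to deduce Theorem~\ref{Thm:PartialC} by combining the type~A partial flag isomorphism of Theorem~\ref{Thm:Partial} with the $\iota$-equivariance of $\zeta$ established during the proof of Theorem~\ref{Thm:MainC}. The strategy mirrors Section~\ref{Sec:TypeCProof}: once $\zeta: \mathcal{F}l_{\mathbf{d}}^a \to X_{\sigma_{\mathbf{d}}}$ is shown to intertwine $\iota$ on source and target, restricting to $\iota$-fixed loci yields the claimed isomorphism, with $T^\iota$-equivariance inherited from $T$-equivariance.

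First I would record the basic setup. Since $\mathbf{d}$ is preserved by $d\mapsto 2m-d$, the product involution restricts to an involution on $\mathcal{F}l_{\mathbf{d}}^a$ with fixed locus $\textrm{Sp}\mathcal{F}_{\mathbf{d}}^a$ by \cite[Proposition~4.9]{FFLTypeCVariety}; analogously $\textrm{SL}_{2n}/P_\lambda$ carries an $\iota$ whose fixed locus is $\textrm{Sp}_{2n}/Q$. A direct computation using~\eqref{Def:SigmaD} confirms $\iota(\sigma_{\mathbf{d}})=\sigma_{\mathbf{d}}$ (verify $\sigma_{\mathbf{d}}(2n+1-r)=2n+1-\sigma_{\mathbf{d}}(r)$ on each of the two intervals appearing in the definition), so $X_{\sigma_{\mathbf{d}}}$ is $\iota$-stable and its fixed locus is the target $X_{\sigma_{\mathbf{d}}}^\iota$.

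The heart of the argument is the commutativity of the partial-flag analogue of Diagram~\eqref{Eq:DiagC}, with the complete Grassmannian products replaced by $\prod_{l=1}^{s}\textrm{Gr}_{d_l}(V)$ and $\prod_{l=1}^{s}\textrm{Gr}_{2d_l-1}(W)$. Factor by factor this reduces to verifying $\zeta_{m-i}(V_{m-i})^\perp = \zeta_{m+i}(V_{m-i}^\perp)$ for each self-dual pair $\{m-i, m+i\} \subset \mathbf{d}$, which is exactly~\eqref{Eq:Perp} and was established in Section~\ref{Sec:TypeCProof} via the bilinear identity~\eqref{Eq:MetricPreserving}. Since that identity is proved for all relevant indices, no new computation is needed; the partial case is a restriction of the complete-flag result. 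Combining this commutativity with Theorem~\ref{Thm:Partial} gives the desired $T^\iota$-equivariant isomorphism $\textrm{Sp}\mathcal{F}_{\mathbf{d}}^a=(\mathcal{F}l_{\mathbf{d}}^a)^\iota\simeq X_{\sigma_{\mathbf{d}}}^\iota$. I do not foresee a serious obstacle: the main effort is absorbed by Theorem~\ref{Thm:MainC}, and what remains is bookkeeping on the self-duality of $\mathbf{d}$ and the $\iota$-invariance of $\sigma_{\mathbf{d}}$.
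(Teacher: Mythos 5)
Your proposal is correct and follows essentially the same route as the paper: the paper likewise deduces Theorem~\ref{Thm:PartialC} from Theorem~\ref{Thm:Partial} together with the commutativity of Diagram~\eqref{Eq:DiagC} (whose factor-by-factor content is exactly \eqref{Eq:Perp}, already proved via \eqref{Eq:MetricPreserving}), after noting that the self-duality of $\mathbf{d}$ makes $\iota$ restrict to $\mathcal{F}l^a_{\mathbf{d}}$ with fixed locus $\textrm{Sp}\mathcal{F}_\mathbf{d}^a$ and that $\sigma_{\mathbf{d}}$ is $\iota$-fixed. No gaps.
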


\section*{APPENDIX: Desingularizations}

In \cite{FF} a resolution of the singularities of a type A degenerate flag variety is constructed. In view of Theorem~\ref{Thm:Main} it is natural to ask if such a desingularization coincides with a Bott-Samelson resolution of the corresponding Schubert variety. In this section, we show that this is indeed the case. In order to state and prove the result we need a combinatorial model that we discuss in Section~\ref{Sec:CombModel}. In Section~\ref{Sec:FF} we recall the construction of Feigin and Finkelberg. In Section~\ref{Sec:BottSam} we construct a Bott-Samelson resolution of $X_{\sigma_n}$. In Section~\ref{Sec:DesingMain} we prove that the two desingularizations coincide (see Theorem~\ref{Thm:Desing}).  In the whole section $n\geq 1$ is a fixed integer.

\subsection{The quiver $\Gamma_n$}\label{Sec:CombModel}
We denote by $\Gamma_n$ the quiver (i.e. the oriented graph) with $N:={n+1\choose 2}$ vertices  $\alpha_{i,j}$ with $1\leq i\leq j\leq n$ and an oriented edge $\alpha_{i,j}\rightarrow\alpha_{i+1,j}$ (for every $1\leq i<j\leq n$) and an oriented edge $\alpha_{i,j}\rightarrow\alpha_{i,j+1}$ (for $1\leq i\leq j<n$). The quiver $\Gamma_n$ is the famous Auslander-Reiten quiver of the equioriented type A quiver algebra (see e.g. \cite{ASS}).   For future reference we embed the quiver $\Gamma_n$ into the decorated quiver $\tilde{\Gamma}_n$ obtained from $\Gamma_n$ by adding $2n+1$ extra vertices $\alpha_{0,1}, \alpha_{0,2}, \alpha_{0,3},\cdots, \alpha_{0,n}, \alpha_{0,n+1}, \alpha_{1,n+1},\alpha_{2,n+2},\cdots, \alpha_{n,2n}$ and oriented edges $\alpha_{0,i}\rightarrow \alpha_{1, i}$ (for $1\leq i\leq n$) and $\alpha_{i,n}\rightarrow \alpha_{i,n+1}$ (for $1\leq i\leq n$). For example for $n=4$, the following is $\tilde{\Gamma}_4$ with the extra vertices highlighted:
\begin{equation}\label{Fig:ARquiver}
\xymatrix@R=8pt@C=5pt{
&&&&*+[F]{\alpha_{0,5}}&&&&\\
&&&*+[F]{\alpha_{0,4}}\ar[dr]&&*+[F]{\alpha_{1,5}}&&&\\
&&*+[F]{\alpha_{0,3}}\ar[dr]&&\alpha_{1,4}\ar[dr]\ar[ur]&&*+[F]{\alpha_{2,5}}&&\\
&*+[F]{\alpha_{0,2}}\ar[dr]&&\alpha_{1,3}\ar[ur]\ar[dr]&&\alpha_{2,4}\ar[dr]\ar[ur]&&*+[F]{\alpha_{3,5}}&\\
*+[F]{\alpha_{0,1}}\ar[dr]&&\alpha_{1,2}\ar[ur]\ar[dr]&&\alpha_{2,3}\ar[ur]\ar[dr]&&\alpha_{3,4}\ar[dr]\ar[ur]&&*+[F]{\alpha_{4,5}}\\
&\alpha_{1,1}\ar[ur]&&\alpha_{2,2}\ar[ur]&&\alpha_{3,3}\ar[ur]&&\alpha_{4,4}\ar[ur]&\\
}
\end{equation}
Given an index $\ell=0,1,\cdots, 2n$, we define the \emph{$\ell$-th column of $\tilde{\Gamma}_n$} as the set of all vertices $\alpha_{i,j}$ such that $i+j-1=\ell$. For example, the vertex $\alpha_{i,i}$ lies on the column $2i-1$. Similarly, given an index $r=1, 2, \cdots, n+2$, we define the  \emph{$r$-th row of $\tilde{\Gamma}_n$} as the set of vertices $\alpha_{i,j}$ of $\tilde{\Gamma}_n$ such that $j-i+1=r$. Notice that $\Gamma_n\subset\tilde{\Gamma}_n$ has $2n-1$ columns and $n$ rows; the row at the bottom consists of the vertices
$\{\alpha_{i,i}\}_{i=1}^n$ and the top row consists of the single vertex $\alpha_{1,n}$.  Following \cite{FF} we order the vertices of $\Gamma_n$ as $\beta_1< \beta_2< \cdots <\beta_N$ so that every row is ordered from left to right and every element of a row is smaller than every element in the rows below. More precisely: we put $\beta_1:=\alpha_{1,n}$, and for $\beta_k:=\alpha_{i_k,j_k}$ we put
$$
\beta_{k+1}:=\left\{
\begin{array}{cc} \alpha_{i_k+1,j_k+1}&\textrm{ if } j_k<n,\\
\alpha_{1,n-i_k}&\textrm{ if }j_k=n.
\end{array}
\right.
$$
To illustrate: $\beta_1=\alpha_{1,n}$, $\beta_N=\alpha_{n,n}$ and $\beta_{N-k}=\alpha_{n-k,n-k}$ for $k=0,1,\cdots, n-1$. We sometimes need to extend this ordering on the vertices of $\Gamma_n$ to an ordering on the vertices of $\tilde{\Gamma}_n$. We do this in the following way: we define $\beta_0:=\alpha_{0,1}$ and
$$
\begin{array}{cc}
\beta_{-k}:=\alpha_{0,k+1},&\beta_{-n-k}:=\alpha_{k,n+1}
\end{array}
$$
for every $1\leq k\leq n$. We notice that the vertex $\beta_{-m}$ lies on the $m$--th column of $\tilde{\Gamma}_n$ (for $0\leq m\leq 2n$). We always use the following notation: the vertex $\beta_k$ is equal to $\alpha_{i_k,j_k}$ and it lies on the column $\ell_k=\ell_{\beta_k}:=i_k+j_k-1$. Moreover for a vertex $\beta_k$ of $\Gamma_n$ we denote by $\beta_k^-$ and $\beta_k^+$ the vertices of $\tilde{\Gamma}_n$ given by:
\begin{equation}\label{Eq:Betapm}
\begin{array}{cc}
\beta_k^-:=\alpha_{i_k-1,j_k}&\beta_k^+:=\alpha_{i_k,j_k+1}.
\end{array}
\end{equation}
By construction, both $\beta_k^-$ and $\beta_k^+$ are \emph{strictly smaller} than $\beta_k$ and $\ell_{\beta_k^\pm}=\ell_{\beta_k}\pm 1$.
We will also need the following notation: for a vertex $\beta_k$ of $\Gamma_n$ and a column index $\ell\in[1,2n-1]$ of $\Gamma_n$ we define the index
\begin{equation}\label{Eq:Def(B:t)}
(\beta_k: \ell)=\textrm{max}\{\beta_s\,|-2n\leq s\leq k,\, \beta_s\textrm{ lies on the }\ell\textrm{-th column}\}.
\end{equation}
The following equalities are a direct consequence of the definition:
\begin{eqnarray}
(\beta_k:\ell_k)&=&\beta_k\\
\label{Eq:(Beta:ell-1)}
(\beta_k:\ell_k\pm1)&=&\beta_k^\pm\\
\label{Eq:(Aij:j)}
(\alpha_{1,j}:\ell)&=&\beta_{-\ell}\;\;\;\forall\,0\leq\ell<j,\\\label{Eq:(BetaN:ell)}
(\beta_N:2k-1)&=&\beta_{N-(n-k)}\;\;\;\forall\,1\leq k\leq n\\\label{Eq:(betaK:t)=(betaK-1:t)}
(\beta_k:t)&=&(\beta_{k-1}:t)\;\;\;\forall t\neq\ell_k. 
\end{eqnarray}
The following technical result will be used later.
\begin{lem}\label{Lemma:(Beta:t)^+}
For $1\leq k\leq N$ and $1\leq t\leq 2n-2$,  either $(\beta_k:t+1)=(\beta_k:t)^+$ or $(\beta_k:t+1)^-=(\beta_k:t)$.
\end{lem}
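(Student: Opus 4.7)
The plan is to prove the lemma by induction on $k$. First, observe that if $t \in [1, 2n-2]$ satisfies both $t \neq \ell_k$ and $t+1 \neq \ell_k$, then equation \eqref{Eq:(betaK:t)=(betaK-1:t)} gives $(\beta_k: t) = (\beta_{k-1}: t)$ and $(\beta_k: t+1) = (\beta_{k-1}: t+1)$, so the claim at $k$ follows immediately from the inductive hypothesis at $k-1$. This reduces the inductive step to the (at most two) special values $t \in \{\ell_k - 1, \ell_k\} \cap [1, 2n-2]$.

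For those special values I would invoke equation \eqref{Eq:(Beta:ell-1)}, which asserts $(\beta_k: \ell_k \pm 1) = \beta_k^\pm$. When $t = \ell_k$ this yields $(\beta_k: t) = \beta_k$ and $(\beta_k: t+1) = \beta_k^+ = (\beta_k: t)^+$, verifying the first alternative; when $t = \ell_k - 1$ it yields $(\beta_k: t) = \beta_k^-$ and $(\beta_k: t+1) = \beta_k$, so that $(\beta_k: t+1)^- = \beta_k^- = (\beta_k: t)$, verifying the second alternative. The base case $k=1$ is handled by direct computation: since $\beta_1 = \alpha_{1,n}$ lies on column $n$ and no other $\beta_s$ with $s \geq 1$ has yet been produced, one has $(\beta_1: n) = \beta_1$ and $(\beta_1: t) = \beta_{-t}$ for $t \neq n$. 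Recalling the explicit descriptions $\beta_{-t} = \alpha_{0, t+1}$ for $0 \leq t \leq n$ and $\beta_{-t} = \alpha_{t-n, n+1}$ for $n \leq t \leq 2n$, one verifies the two alternatives directly on the sub-ranges $t \leq n-2$, $t = n-1$, $t = n$, and $t \geq n+1$.

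The only genuine obstacle is verifying equation \eqref{Eq:(Beta:ell-1)}, which the paper states as a direct consequence of the definition but which really relies on the row-by-row structure of the ordering. Concretely, writing $\beta_k = \alpha_{i_k, j_k}$ and $r = j_k - i_k + 1$, the columns iterated within row $r$ all share the parity of $r$, so neither $\ell_k - 1$ nor $\ell_k + 1$ is touched within row $r$. The row iterated immediately before row $r$ is row $r+1$, of opposite column-parity, and in that row the vertex sitting on column $\ell_k - 1$ (respectively $\ell_k + 1$) is precisely $\beta_k^- = \alpha_{i_k - 1, j_k}$ (respectively $\beta_k^+ = \alpha_{i_k, j_k+1}$) whenever these belong to $\Gamma_n$; in the boundary cases $i_k = 1$ or $j_k = n$, the quantity $\beta_k^\pm$ coincides with an appropriate extra vertex $\beta_{-(\ell_k \pm 1)}$ by \eqref{Eq:(Aij:j)}, and no earlier $\beta_s$ with $s \geq 1$ lies on that column. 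Once \eqref{Eq:(Beta:ell-1)} is in hand, the induction above finishes the proof.
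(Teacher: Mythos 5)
Your proof is correct, but it takes a genuinely different route from the paper's. The paper argues in one step: setting $\beta_s:=(\beta_k:t)$ and $\beta_r:=(\beta_k:t+1)$, these lie on adjacent columns and are therefore distinct, and whichever of the two is smaller is forced, by the maximality in \eqref{Eq:Def(B:t)}, to be the appropriate neighbour of the larger one --- if $\beta_s<\beta_r$ then $\beta_r^-=\beta_s$, and if $\beta_r<\beta_s$ then $\beta_r=\beta_s^+$. Your induction on $k$ instead reduces everything to the identity \eqref{Eq:(Beta:ell-1)} together with the stability property \eqref{Eq:(betaK:t)=(betaK-1:t)}, and then actually proves \eqref{Eq:(Beta:ell-1)} using the row-by-row, parity-alternating structure of the order on the vertices of $\tilde{\Gamma}_n$. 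The paper's argument is shorter, but its appeal to ``maximality'' silently relies on exactly the structural facts you spell out (columns within a row share the parity of the row index, the order is row-dominant, and the column-$(\ell_k\pm1)$ vertex in the row immediately preceding that of $\beta_k$ is $\beta_k^\pm$, or a decorated vertex in the boundary cases $i_k=1$, $j_k=n$); your version makes these explicit and in passing justifies \eqref{Eq:(Beta:ell-1)}, which the paper only asserts as a direct consequence of the definition. One cosmetic point, common to both treatments: $(\beta_k:t)^{\pm}$ must be read with the obvious extension of \eqref{Eq:Betapm} to the decorated vertices, since $(\beta_k:t)$ may well equal some $\beta_{-m}$.
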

\begin{proof}
Let $\beta_s:=(\beta_k:t)$ and $\beta_r:=(\beta_k:t+1)$. Then, $\beta_s$ and $\beta_r$ lie in different columns and  either $\beta_s<\beta_r$ or $\beta_r<\beta_s$. If $\beta_s<\beta_r$ then $\beta_r^-=\beta_s$ by maximality of $\beta_s$. If $\beta_r<\beta_s$ then $\beta_r=\beta_s^+$ by maximality of $\beta_r$.
\end{proof}
\subsection{The desingularization of Feigin and Finkelberg}\label{Sec:FF}

In this section we recall the construction of a desingularization $\xymatrix@1{R_n\ar@{->>}[r]&\mathcal{F}l_{n}^a}$ of the complete degenerate flag variety $\mathcal{F}l_n^a$ due to E.~Feigin and M.~Finkelberg \cite{FF}.
We fix a complex vector space $V$ of dimension $n+1$ and ordered basis $\{f_1,\cdots, f_{n+1}\}$. For every $1\leq k\leq N$ we consider the coordinate subspace $V_{\beta_k}$ of V given by
\begin{equation}\label{Def:Vij}
V_{\beta_k}:=\textrm{Im }(\pr_{j_k-1}\circ\cdots\circ\pr_{i_k})=\langle f_1,\ldots, f_{i_k-1},f_{j_k},\ldots, f_{n+1}\rangle
\end{equation}
with the convention that $f_0=0$ so that $V_{1,j}=\langle f_j,\cdots, f_{n+1}\rangle$ (as before, the parenthesis $\langle \cdots \rangle$ denotes the $\mathbf{C}$-linear span of the collection of vectors enclosed within them and $\pr_k:V\rightarrow V$ is the projection along the line generated by $f_k$). Notice that the coordinate subspace $V_{\beta_k}$ is the analogous of $W_{i_k,j_k}$ in \cite{FF} taking into account our notation \eqref{Def:DegFlag}. We decorate the collection $(V_{\beta_k})_{k=1}^N$ with the subspaces
$$
\begin{array}{ccc}
V_{\beta_0}=V_{\beta_{-k}}:=\{0\}& V_{\beta_{-n-k}}:=V
\end{array}
$$
for every $k=1,\cdots, n$.  
In \cite[Definition~2.1]{FF},  the variety $R_n$ is defined as the variety of collections $(Z_{\beta_k})_{k=1}^N$ such that
\begin{eqnarray}\label{Eq:CondRn1}
&&Z_{\beta_k}\in \textrm{Gr}_{i_k}(V_{\beta_k}),\\\label{Eq:CondRn2}
&&Z_{\beta_k^-}\subset Z_{\beta_k},\\\label{Eq:CondRn3}
&&\textrm{pr}_{j_k}(Z_{\beta_k})\subset Z_{\beta_k^+},
\end{eqnarray}
with the convention that $Z_{0,k}=\{0\}$ and $Z_{k,n+1}=V$ for $1\leq k\leq n$. The variety $R_n$ is a tower of $\mathbf{P}^1$-fibrations: for every $1\leq s\leq N$ we consider the variety $R_n(s)$ of ``truncated'' collections $(Z_{\beta_k})_{k=1}^s$ satisfying conditions \eqref{Eq:CondRn1}, \eqref{Eq:CondRn2}, \eqref{Eq:CondRn1}. Then there is a natural epimorphism
$$
p_n(s):\xymatrix{R_n(s+1)\ar@{->>}[r]&R_n(s)}:\, (Z_{\beta_i})_{1\leq i\leq s+1}\mapsto(Z_{\beta_i})_{1\leq i\leq s}.
$$
whose fiber is $\PP^1$. In particular $R_n$ is a smooth projective variety and the map
$$
p_n: R_n\rightarrow \mathcal{F}l^a_{n+1}: (Z_{\beta_k})_{k=1}^N\mapsto (Z_{i,i})_{1\leq i\leq n}
$$
is a desingularization of $\mathcal{F}l^a_{n+1}$ (\cite{FF}).

\subsection{A Bott-Samelson resolution of $X_{\sigma_n}$}\label{Sec:BottSam}

In this Section we discuss a Bott-Samelson resolution of the Schubert variety $X_{\sigma_n}$ (see \eqref{Def:Sigma}).
We choose the following reduced expression for the permutation $\sigma_n\in\textrm{Sym}_{2n}$ :
\begin{equation}\label{Eq:SigmaReduced}
\sigma_n=s_n\circ(s_{n-1}\circ s_{n+1})\circ(s_{n-2}\circ s_n\circ s_{n+2})\circ\cdots\circ(s_1\circ s_3\circ\cdots\circ s_{2n-1})
\end{equation}
($s_i$ denotes the simple transposition $(i,i+1)$). In particular we see that the length of $\sigma_n$ is $N={n+1\choose 2}$ and the simple reflections appearing in \eqref{Eq:SigmaReduced} are in bijection with the vertices of $\Gamma_n$ as follows
$$
\sigma_n=\tau_{\beta_1}\circ\tau_{\beta_2}\circ\cdots\circ\tau_{\beta_N}.
$$
To illustrate: $\tau_{\beta_1}=s_n$, $\tau_{\beta_2}=s_{n-1}$, $\tau_{\beta_N}=s_{2n-1}$. By construction, 
$\tau_{\beta_k}=s_{\ell_k}$.

We fix a complex vector space $W$ of dimension $2n$ with ordered basis $\{e_1,\cdots,e_{2n}\}$.
Given two complete flags
$$
U_\bullet=(U_1\subset U_2\subset\cdots\subset U_{2n-1}\subset W),\,W_\bullet=(W_1\subset W_2\subset\cdots\subset W_{2n-1}\subset W)
$$
in $SL_{2n}/B$ and an integer $k=1,2,\cdots, 2n-1$, the pair $(U_\bullet,W_\bullet)$ is said to be in \emph{relative position} $k$ if
$U_i=W_i$ for every $i\neq k$. Let $F_\bullet$ be the standard flag of $W$, i.e. $F_i=\langle f_1,\ldots, f_i\rangle$, for every $0\leq i\leq 2n$ ($F_0:=\{0\}$). Notice that the subspace $F_{n+i}$ was denoted by $U_{n+i}$ before and we freely use the two notations. We consider the variety
\begin{equation}\label{Def:BS}
\textrm{BS}_n:=\{(U^{\beta_k}_\bullet)_{k=0}^N\in (SL_{2n}/B)^{N+1}|\begin{array}{l}
\,U^{\beta_0}_\bullet=F_\bullet\textrm{ and }(U^{\beta_{k+1}}_\bullet,U^{\beta_{k}}_\bullet)\textrm{ is }\\\textrm{ in relative position }\ell_{k+1}
\end{array}\}.
\end{equation}
This variety is the Bott-Samelson variety associated with the reduced expression \eqref{Eq:SigmaReduced} of $\sigma_n$. 
In particular, the following result is well-known  \cite{Hansen}, \cite{Demazure}.  
\begin{prop}
The variety $\textrm{BS}_n$ is smooth. The map
\begin{equation}\label{Eq:DesingMapBS}
\xymatrix@R=5pt{
\rho_n:BS_n\ar@{->>}[r]&X_{\sigma_n}:
(U^{\beta_k}_\bullet)_{k=0}^N\ar@{|->}[r]&(U^{\beta_N}_{2k-1})_{k=1}^n
}
\end{equation}
is a desingularization.
\end{prop}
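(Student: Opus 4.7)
The plan is to realize $BS_n$ as an iterated $\mathbb{P}^1$-bundle over a point, and then to factor $\rho_n$ through the full flag variety $SL_{2n}/B$ so that the classical theory of Bott-Samelson resolutions applies.

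For smoothness, I would argue by induction on $k = 0, 1, \ldots, N$. Let $BS_n^{(k)}$ denote the variety of truncated tuples $(U^{\beta_0}_\bullet, \ldots, U^{\beta_k}_\bullet)$ satisfying the relative position conditions of \eqref{Def:BS}. Then $BS_n^{(0)}$ is the single point $\{F_\bullet\}$, and the forgetful morphism $BS_n^{(k)} \to BS_n^{(k-1)}$ is a $\mathbb{P}^1$-bundle: over a tuple ending at a flag $V_\bullet$, the fiber is the set of complete flags $U_\bullet$ with $U_j = V_j$ for every $j \neq \ell_k$, and such flags are parameterized by the choice of a line $U_{\ell_k}/V_{\ell_k-1}$ inside the two-dimensional quotient $V_{\ell_k+1}/V_{\ell_k-1}$. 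By induction, $BS_n = BS_n^{(N)}$ is smooth of dimension $N$.

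To analyze $\rho_n$, I would factor it as $\rho_n = \pi \circ \tilde\rho_n$, where $\tilde\rho_n: BS_n \to SL_{2n}/B$ sends a tuple to its last flag $U^{\beta_N}_\bullet$ and $\pi: SL_{2n}/B \to SL_{2n}/P$ is the canonical projection. Because $F_\bullet$ is the $B$-fixed flag and, by \eqref{Eq:SigmaReduced}, the word $s_{\ell_1} s_{\ell_2} \cdots s_{\ell_N}$ is a reduced expression for $\sigma_n$, the pair $(BS_n, \tilde\rho_n)$ is precisely the classical Bott-Samelson variety attached to this word together with its natural map; by \cite{Hansen, Demazure}, $\tilde\rho_n$ is a proper, surjective, birational morphism onto the Schubert variety $X^B_{\sigma_n} \subset SL_{2n}/B$ associated with $\sigma_n$.

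The last step is to descend to $SL_{2n}/P$. Since $\sigma_n \in \textrm{Sym}_{2n}^J$ is the minimal length representative of its coset in $\textrm{Sym}_{2n}/W_J$, the length additivity $\ell(\sigma_n v) = \ell(\sigma_n) + \ell(v)$ for $v \in W_J$ implies $B \cap \sigma_n P \sigma_n^{-1} = B \cap \sigma_n B \sigma_n^{-1}$, so $\pi$ restricts to a bijection $B\sigma_n B/B \to B\sigma_n P/P$ between the open Schubert cells; hence $\pi|_{X^B_{\sigma_n}}$ is a proper birational surjection onto $X_{\sigma_n}$. Combining, $\rho_n = \pi \circ \tilde\rho_n$ is a proper birational morphism from the smooth variety $BS_n$ onto $X_{\sigma_n}$, that is, a desingularization. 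The only point beyond black-box Bott-Samelson theory is this last descent, and it reduces to a standard Bruhat-order/dimension count exploiting $\sigma_n \in \textrm{Sym}_{2n}^J$.
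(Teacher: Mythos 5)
Your argument is correct and matches the paper's treatment: the paper simply invokes the classical Bott-Samelson theory of \cite{Hansen} and \cite{Demazure} for the reduced word \eqref{Eq:SigmaReduced}, which is exactly what you unpack via the iterated $\mathbb{P}^1$-bundle structure for smoothness and the standard descent from $SL_{2n}/B$ to $SL_{2n}/P$ using that $\sigma_n\in\textrm{Sym}_{2n}^J$ is the minimal-length representative of its coset. No gaps.
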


We now give a description of $\textrm{BS}_n$ which will be used later.
\begin{definition}\label{Def:Bn}
Let $B_n$ be the projective variety of collections $(U_{\beta_k})_{k=-2n}^N$ (indexed by the vertices of $\tilde{\Gamma}_n$) such that, for every $1\leq k\leq N$ and $0\leq t\leq 2n$,
\begin{eqnarray}
\label{Def:BnCond3}
&&U_{\beta_{-t}}=F_t,\\\label{Def:BnCond1}
&&U_{\beta_k}\in \textrm{Gr}_{\ell_k}(W),\\\label{Def:BnCond2}
&&U_{\beta_k}\subset U_{n+i_k},\\\label{Def:BnCond4}
&&U_{\beta_k^-}\subset U_{\beta_k}\subset U_{\beta_k^+}.
\end{eqnarray}
\end{definition}
\begin{prop}\label{Prop:BSnBn}
The map $\theta_n:\textrm{BS}_n\rightarrow B_n$ which sends a point $U_\bullet^{\beta_\bullet}=(U^{\beta_k}_\bullet)_{k=0}^N$ to the collection $\theta_n(U_\bullet^{\beta_\bullet})=(U_{\beta_k})_{k=-2n}^N$ defined by
$$
U_{\beta_k}:=\left\{
\begin{array}{cc}
U^{\beta_k}_{\ell_k}&\textrm{ if }k\geq0\\&\\
U^{\beta_0}_{k}&\textrm{ if }k<0
\end{array}
\right.
$$
is an isomorphism of projective varieties. The inverse $\psi=\theta_n^{-1}$ sends a point $(U_{\beta_k})_{k=-2n}^N$ of $B_n$ to the collection of complete flags $(U^{\beta_k}_\bullet)_{k=0}^N$ defined by
\begin{equation}\label{Eq:InverseTheta}
U^{\beta_k}_t:=U_{(\beta_k:t)}
\end{equation}
for every $1\leq t\leq 2n-1$.
\end{prop}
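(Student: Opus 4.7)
The plan is to exhibit the map $\psi$ of \eqref{Eq:InverseTheta} as a two-sided inverse to $\theta_n$. Both $\theta_n$ and $\psi$ are defined by explicit formulas between products of Grassmannians and products of flag varieties, so the morphism property is immediate; the work lies in showing that each map lands in its claimed target and that the two compositions are the identity.

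First I would establish the key identity: for any $(U^{\beta_k}_\bullet)_{k=0}^N \in \textrm{BS}_n$, setting $U_{\beta_k} := U^{\beta_k}_{\ell_k}$ for $k \geq 1$ and $U_{\beta_{-t}} := F_t$, one has $U^{\beta_k}_t = U_{(\beta_k:t)}$ for all admissible $k, t$. This is proved by induction on $k$: the Bott-Samelson relative position condition forces $U^{\beta_k}_t = U^{\beta_{k-1}}_t$ for $t \neq \ell_k$, which matches \eqref{Eq:(betaK:t)=(betaK-1:t)}, while for $t = \ell_k$ the tautology $(\beta_k:\ell_k) = \beta_k$ closes the induction. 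The identity immediately yields $\psi \circ \theta_n = \mathrm{id}_{\textrm{BS}_n}$.

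Next I would verify that $\psi$ lands in $\textrm{BS}_n$. Given $(U_{\beta_k}) \in B_n$, the subspace $U^{\beta_k}_t := U_{(\beta_k:t)}$ has the correct dimension $t$ since $(\beta_k:t)$ lies in column $t$. That $U^{\beta_k}_\bullet$ is a complete flag follows from Lemma~\ref{Lemma:(Beta:t)^+}: in either of its two alternatives, condition \eqref{Def:BnCond4} of $B_n$ yields $U_{(\beta_k:t)} \subset U_{(\beta_k:t+1)}$. The equality $U^{\beta_0}_\bullet = F_\bullet$ follows from \eqref{Def:BnCond3}, and the relative position of consecutive flags follows from \eqref{Eq:(betaK:t)=(betaK-1:t)}. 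The identity $\theta_n \circ \psi = \mathrm{id}_{B_n}$ is then immediate from $(\beta_k:\ell_k) = \beta_k$.

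The main obstacle is the remaining check that $\theta_n$ actually lands in $B_n$. Conditions \eqref{Def:BnCond1} and \eqref{Def:BnCond3} are trivial. For \eqref{Def:BnCond4}, the key identity combined with \eqref{Eq:(Beta:ell-1)} recovers $U_{\beta_k^\pm}$ as $U^{\beta_k}_{\ell_k \pm 1}$, so the flag inclusions $U^{\beta_k}_{\ell_k - 1} \subset U^{\beta_k}_{\ell_k} \subset U^{\beta_k}_{\ell_k + 1}$ give the required containments. Condition \eqref{Def:BnCond2}, namely $U_{\beta_k} \subset F_{n+i_k}$, I would handle by induction on $k$. If $j_k = n$ then $\beta_k^+ = \alpha_{i_k, n+1} = \beta_{-(n+i_k)}$, so $U_{\beta_k^+} = F_{n+i_k}$ by definition and the already established $U_{\beta_k} \subset U_{\beta_k^+}$ closes the case. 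If $j_k < n$ then $\beta_k^+ \in \Gamma_n$ is some $\beta_{k''}$ with $k'' < k$ and $i_{\beta_k^+} = i_k$, so the inductive hypothesis applied at $\beta_{k''}$ gives $U_{\beta_k^+} \subset F_{n+i_k}$, and $U_{\beta_k} \subset U_{\beta_k^+}$ finishes the proof.
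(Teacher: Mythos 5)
Your proposal is correct and follows essentially the same route as the paper: the same key identity $U^{\beta_k}_t=U_{(\beta_k:t)}$ proved by induction on $k$ from the relative-position condition and \eqref{Eq:(betaK:t)=(betaK-1:t)}, the same use of Lemma~\ref{Lemma:(Beta:t)^+} to see that $\psi$ lands in $\textrm{BS}_n$, and an induction along the $\beta_k\mapsto\beta_k^+$ direction for condition \eqref{Def:BnCond2} (the paper phrases this as induction on the row index and also records the lower containment $F_{j_k-1}\subset U_{\beta_k}$, which, as you implicitly note, is not needed for membership in $B_n$).
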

\begin{proof}

Let us prove that the map $\theta_n$ is well-defined. This is based on the following technical result. 
\begin{lem}\label{Lem:UBetaDIm}
Let $(U^{\beta_k}_\bullet)_{k=0}^N$ be a point of $\textrm{BS}_n$. Then,
\begin{enumerate}
\item For every $1\leq k\leq N$, and $1\leq t\leq 2n-1$ we have
\begin{equation}\label{Eq:UBeta^t=U(beta:T)}
U^{\beta_k}_t=U^{(\beta_k:t)}_t
\end{equation}
\item For $1\leq k\leq N$
\begin{equation}\label{Eq:F^jSubset UBetak}
F_{j_k-1}\subset U^{\beta_k}_{\ell_k}\subset F_{n+i_k}
\end{equation}
\end{enumerate}
\end{lem}
\begin{proof}
By definition,  $U^{\beta_k}_t=U^{\beta_{k-1}}_t$ for $t\neq\ell_k$ and, in particular, \eqref{Eq:UBeta^t=U(beta:T)} holds for $k=1$. By induction on $k\geq 1$, using \eqref{Eq:(betaK:t)=(betaK-1:t)}, we get the desired \eqref{Eq:UBeta^t=U(beta:T)} . 

If $\beta_k=\alpha_{1,j_k}$ then $\ell_k=j_k$ and $(\beta_k:j_k-1)=\beta_{-(j_k-1)}$ (see \eqref{Eq:(Aij:j)}); in view of \eqref{Eq:UBeta^t=U(beta:T)} we have $U^{\beta_k}_{\ell_k-1}=U^{\beta_0}_{\ell_k-1}=F_{\ell_k-1}$ and hence $F_{j_k-1}=U^{\beta_k}_{\ell_k-1}\subset U^{\beta_k}_{\ell_k}$.  Similarly, if $\beta_k=\alpha_{i,n}$, then  $\ell_k=n+i-1$ and $(\beta_k:\ell_k+1)=\beta_k^+=\beta_{-(n+i)}$; in view of \eqref{Eq:UBeta^t=U(beta:T)},  $U^{\beta_k}_{\ell_k+1}=U^{\beta_0}_{\ell_k+1}=F_{n+i}$ and hence $U^{\beta_k}_{\ell_k}\subset U^{\beta_k}_{\ell_k+1}=F_{n+i}$. We hence assume that both $\beta_k^-$ and $\beta_k^+$ are vertices of $\Gamma_n$. In this case we have
$$
U^{\beta_k^-}_{\ell_k-1}=U^{\beta_k}_{\ell_k-1}\subset U^{\beta_k}_{\ell_k}\subset U^{\beta_k}_{\ell_k+1}=U^{\beta_k^+}_{\ell_k+1}.
$$
By induction on the row index, (using \eqref{Eq:Betapm}) the desired \eqref{Eq:F^jSubset UBetak} follows.
\end{proof}
Given $U_\bullet^{\beta_\bullet}\in\textrm{BS}_n$, in view of Lemma~\ref{Lem:UBetaDIm} we have $U^{\beta_k}_{\ell_k}\in \textrm{Gr}_{\ell_k}(U_{n+i_k})$; moreover
$$
U^{\beta_k^-}_{\ell_{\beta_k^-}}=U^{\beta_k}_{\ell_k-1}\subset U_{\beta_k}\subset U^{\beta_k}_{\ell_k+1}=U^{\beta_k^+}_{\ell_{\beta_k^+}}
$$
and hence $\theta_n(U_\bullet^{\beta_\bullet})$ satisfies all the four conditions \eqref{Def:BnCond3}--\eqref{Def:BnCond4} and the map $\theta_n$ is well-defined and it is clearly algebraic. 

With the help of Lemma \ref{Lemma:(Beta:t)^+} and in view of \eqref{Eq:UBeta^t=U(beta:T)}, one easily checks that the map $\psi$  is well-defined. It is clearly algebraic and the composition with $\theta_n$ is the identity.
\end{proof}

\subsection{Main Result}\label{Sec:DesingMain}
We are now ready to state and prove the main result of the this appendix.

\begin{thm}\label{Thm:Desing}
There exists an isomorphism $\xymatrix@1@R=5pt{\psi_n:R_n\ar[r]&\textrm{BS}_n}$
of projective varieties such that the following diagram
\begin{equation}\label{DiagramThmDesing}
\xymatrix{
R_n\ar^\psi[r]\ar@{->>}_{p_n}[d]&\textrm{BS}_n\ar@{->>}^{\rho_n}[d]\\
\mathcal{F}l_{n+1}^a\ar^\zeta[r]&X_{\sigma_n}
}
\end{equation}
commutes (where the map $\zeta$ is the one in Theorem~\ref{Thm:Main}).
\end{thm}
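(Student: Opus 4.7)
\medskip

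\noindent\textbf{Proof plan for Theorem~\ref{Thm:Desing}.}
The strategy is to build $\psi_n$ with target the model $B_n$ of $\textrm{BS}_n$ from Proposition~\ref{Prop:BSnBn}, and then compose with $\theta_n^{-1}$. The key observation is that the maps $\pi_i:U_{n+i}\to V$ of \eqref{Def:MapsPi} have analogues indexed by every vertex of $\Gamma_n$: for each $\beta_k=\alpha_{i_k,j_k}$ set
\begin{equation*}
\tilde\pi_{\beta_k}:=\pr_{j_k-1}\circ\cdots\circ\pr_{i_k}\circ\pi_{i_k}\colon U_{n+i_k}\longrightarrow V_{\beta_k},
\end{equation*}
with the convention that the product of projections is empty when $i_k=j_k$ (so $\tilde\pi_{\alpha_{i,i}}=\pi_i$). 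A direct inspection of the basis vectors shows that $\tilde\pi_{\beta_k}$ is surjective onto $V_{\beta_k}=\langle f_1,\ldots,f_{i_k-1},f_{j_k},\ldots,f_{n+1}\rangle$ with kernel $F_{j_k-1}=\langle e_1,\ldots,e_{j_k-1}\rangle$. Moreover, the identity $\pi_{i+1}|_{U_{n+i}}=\pr_i\circ\pi_i$ already used in Section~\ref{Sec:Proof} immediately yields the two compatibility relations that make the whole construction work:
\begin{equation*}
\tilde\pi_{\beta_k}\big|_{U_{n+i_k-1}}=\tilde\pi_{\beta_k^-},\qquad \tilde\pi_{\beta_k^+}=\pr_{j_k}\circ\tilde\pi_{\beta_k}.
\end{equation*}

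\medskip

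\noindent Next, I would define $\psi_n\colon R_n\to B_n$ by sending $(Z_{\beta_k})_{k=1}^N$ to the collection $(U_{\beta_k})_{k=-2n}^N$ where $U_{\beta_{-t}}:=F_t$ and $U_{\beta_k}:=\tilde\pi_{\beta_k}^{-1}(Z_{\beta_k})$ for $k\ge 1$. Checking the four axioms \eqref{Def:BnCond3}--\eqref{Def:BnCond4} of $B_n$ is now a matter of bookkeeping: the dimension is $\dim Z_{\beta_k}+\dim\ker\tilde\pi_{\beta_k}=i_k+(j_k-1)=\ell_k$, containment in $U_{n+i_k}$ is tautological, and the two inclusions $U_{\beta_k^-}\subset U_{\beta_k}\subset U_{\beta_k^+}$ translate, via the two compatibility relations above, into exactly the $R_n$-conditions \eqref{Eq:CondRn2} and \eqref{Eq:CondRn3}. (Special care must be taken at the boundary vertices $\alpha_{1,j}$ and $\alpha_{i,n}$, but these match the conventions $Z_{0,k}=0$, $Z_{k,n+1}=V$, and the corresponding $U_{\beta_{-t}}=F_t$.)

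\medskip

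\noindent For the inverse $\phi_n\colon B_n\to R_n$ I would set $Z_{\beta_k}:=\tilde\pi_{\beta_k}(U_{\beta_k})$. The first thing to check is that this lies in $\textrm{Gr}_{i_k}(V_{\beta_k})$, and for this one needs $\ker\tilde\pi_{\beta_k}=F_{j_k-1}\subseteq U_{\beta_k}$. This is where the extra ``boundary rows'' of $\tilde\Gamma_n$ play their role: traversing the chain of edges $\alpha_{0,j_k}\to\alpha_{1,j_k}\to\cdots\to\alpha_{i_k,j_k}$ in $B_n$ gives $F_{j_k-1}=U_{\alpha_{0,j_k}}\subseteq U_{\alpha_{1,j_k}}\subseteq\cdots\subseteq U_{\beta_k}$, as required. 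Then the two defining inclusions of $R_n$ are verified by the same two compatibility relations used above, and the fact that $\phi_n\circ\psi_n=\mathrm{id}$ and $\psi_n\circ\phi_n=\mathrm{id}$ is immediate from $\tilde\pi_{\beta_k}$ being surjective onto $V_{\beta_k}$ with kernel contained in $U_{\beta_k}$ on each side.

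\medskip

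\noindent Finally, to establish commutativity of \eqref{DiagramThmDesing}, I would identify $\textrm{BS}_n$ with $B_n$ via $\theta_n$ and trace both sides on a point $(Z_{\beta_k})\in R_n$. Going down then right: $p_n$ extracts $(Z_{\alpha_{i,i}})_{i=1}^n$, and $\zeta$ sends this to $(\pi_i^{-1}(Z_{\alpha_{i,i}}))_{i=1}^n$ by Theorem~\ref{Thm:Main}. Going right then down: $\rho_n\circ\theta_n^{-1}$ applied to $(U_{\beta_k})$ equals $(U^{\beta_N}_{2k-1})_{k=1}^n=(U_{(\beta_N:2k-1)})_{k=1}^n$ by \eqref{Eq:InverseTheta}, and \eqref{Eq:(BetaN:ell)} with the identification $\beta_{N-(n-k)}=\alpha_{k,k}$ gives this as $(U_{\alpha_{k,k}})_{k=1}^n=(\pi_k^{-1}(Z_{\alpha_{k,k}}))_{k=1}^n$, since $\tilde\pi_{\alpha_{k,k}}=\pi_k$. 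The two outcomes coincide. The main obstacle in executing this plan is not conceptual but notational: keeping straight the three simultaneous indexings (vertices of $\tilde\Gamma_n$, total order $\beta_k$, column position $\ell_k$) while handling the boundary vertices of $\tilde\Gamma_n$ separately from the interior vertices of $\Gamma_n$.
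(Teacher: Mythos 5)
Your proposal is correct and follows essentially the same route as the paper: the map $\tilde\pi_{\beta_k}$ you introduce is exactly the restriction $\pi_{j_k}|_{U_{n+i_k}}$ used in the paper (they agree by the commutative square relating $\pi_{i_k}$, $\pi_{j_k}$ and $\pr_{j_k-1}\circ\cdots\circ\pr_{i_k}$), so your $\psi_n$, its inverse via images, the kernel-containment argument along the column of $\tilde\Gamma_n$, the reduction to $B_n$ via Proposition~\ref{Prop:BSnBn}, and the commutativity check using \eqref{Eq:(BetaN:ell)} all coincide with the paper's proof.
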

\begin{proof}
Recall that for every $1\leq i\leq j\leq n$ we have a commutative diagram
$$
\xymatrix@C=50pt{
U_{n+i}\ar_{\pi_i}[d]\ar@{^{(}->}[r]&U_{n+j}\ar^{\pi_j}[d]\\
V\ar^{\pr_{j-1}\circ\cdots\circ \pr_i}[r]&V
}
$$
and it follows immediately from \eqref{Def:Vij} that
\begin{equation}\label{Eq:Vbeta}
V_{\beta_k}=\pi_{j_k}(U_{n+i_k}).
\end{equation}
Since $\textrm{dim Ker }(\pi_{j_k})=j_k-1$ and hence $\ell_k=i_k+\textrm{dim Ker }(\pi_{j_k})$, we have an embedding of projective varieties:
$$
\xymatrix@R=5pt{
\zeta_{\beta_k}:\ar@{^{(}->}[r]\textrm{Gr}_{i_k}(V_{\beta_k})&\textrm{Gr}_{\ell_k}(U_{n+i_k}),\\
Z\ar@{|->}[r]&\pi_{j_k}^{-1}(Z).
}
$$
This induces an embedding
$$
\xymatrix@R=5pt{
\zeta=\zeta_n:=\prod_{k=1}^N\zeta_{\beta_k}:\prod_{k=1}^N\textrm{Gr}_{i_k}(V_{\beta_k})\ar@{^{(}->}[r]&\prod_{k=1}^N\textrm{Gr}_{\ell_k}(U_{n+i_k})}.
$$
In view of \eqref{Eq:CondRn1},  the variety $R_n$ is contained in $\prod_{k=1}^N\textrm{Gr}_{i_k}(V_{\beta_k})$ and we claim that the image $\zeta(R_n)$ is canonically isomorphic to the variety $B_n$ (see Definition \ref{Def:Bn}).
Indeed let $U=(U_{\beta_k})_{k=1}^N=\zeta((Z_{\beta_k})_{k=1}^N)\in \zeta(R_n)$, i.e. $U_{\beta_k}:=\pi_{j_k}^{-1}(Z_{\beta_k})$. We decorate the collection $U=(U_{\beta_k})_{k=1}^N$ with the subspaces
$U_{\beta_{-t}}:=F_t$, for $0\leq t\leq 2n$.  The decorated collection $\tilde{U}=(U_{\beta_k})_{k=-2n}^N$ satisfies \eqref{Def:BnCond3}, \eqref{Def:BnCond1}, \eqref{Def:BnCond2} and \eqref{Def:BnCond4}, i.e. $\tilde{U}\in B_n$.  Indeed, \eqref{Def:BnCond3} is clearly satisfied, \eqref{Def:BnCond1} follows from \eqref{Eq:CondRn1} and \eqref{Def:BnCond2} follows from  \eqref{Eq:CondRn2}. Since $Z_{\beta_k}\subset V_{\beta_k}=\pi_{j_k}(U_{n+i_k})$, there exists $U\subset U_{n+i_k}$ such that $Z_{\beta_k}=\pi_{j_k}(U)$; in view of \eqref{Eq:CondRn3}, $\pr_{j_k}(Z_{\beta_k})\subset Z_{\beta_k^+}$ and we have
$$
\pi_{j_k+1}^{-1}(Z_{\beta_k^+})\supset \pi_{j_k+1}^{-1}\pr_{j_k}(Z_k)=\pi_{j_k+1}^{-1}\pi_{j_k+1}(U)\supseteq U=\pi_{j_k}^{-1}(Z_k)
$$
and hence  $\tilde{U}$ satisfies \eqref{Def:BnCond4}. 

On the other hand,  let $(U_{\beta_k})_{k=-2n}^N$ be a point of $B_n$. We define $Z_{\beta_k}:=\pi_{j_k}(U_{\beta_k})$ ($k=1,2,\cdots, N$) and we check that $(Z_{\beta_k})_{k=1}^N\in R_n$, i.e. it satisfies \eqref{Eq:CondRn1}, \eqref{Eq:CondRn2} and \eqref{Eq:CondRn3}. In view of \eqref{Eq:Vbeta}, $Z_{\beta_k}\in V_{\beta_k}$. Since $U_{\beta_k^-}\subset U_{\beta_k}$  we have $\textrm{Ker }\pi_{j_k}=F_{j_k-1}\subseteq U_{\beta_k}$ and hence $\textrm{dim }Z_{\beta_k}=\ell_k-(j_k-1)=i_k$. It follows that $Z_{\beta_k}\in\textrm{Gr}_{i_k}(V_{\beta_k})$ and \eqref{Eq:CondRn1} is satisfied. Since $U_{\beta_k^-}\subset U_{\beta_k}$, \eqref{Eq:CondRn2} follows. To prove \eqref{Eq:CondRn3}, we notice that
$$
\pr_{j_k}(Z_{\beta_k})=\pr_{j_k}\circ \pi_{j_k}(U_{\beta_k})=\pi_{j_{k+1}}(U_{\beta_k})\subseteq \pi_{j_{k+1}}(U_{\beta_k^+})=Z_{\beta_k^+}
$$
and the claim is proved.

In view of Proposition \ref{Prop:BSnBn}, the  map $\psi_n:=\theta_n^{-1}\circ\zeta_n:R_n\rightarrow \textrm{BS}_n$ is hence an isomorphism of projective varieties. It remains to check that the diagram \eqref{DiagramThmDesing} commutes. Using \eqref{Eq:(BetaN:ell)}, we get 
$$
\rho_n\circ\psi_n((Z_{\beta_k})_{k=1}^N)=(\pi_{j_{k}}^{-1}(Z_{\alpha_{k,k}}))_{k=1}^{n}.
$$
On the other hand, $p_n((Z_{\beta_k})_{k=1}^N)=(Z_{\alpha_{k,k}})_{k=1}^n$, and
$$
\zeta\circ p_n((Z_{\beta_k})_{k=1}^N)=(\pi_{j_k}^{-1}(Z_{\alpha_{k,k}}))_{k=1}^{n}
$$
as desired.
\end{proof}

\begin{ack} The main part of this project took place during M.L. stay at the Department of Mathematics ``Guido Castelnuovo'' of ``Sapienza-Universit\`a di Roma''. We thank that institution for the perfect working conditions. The work of M.L. was financed  by DFG SPP1388  and ``Teoria delle rappresentazioni e applicazioni, Progetto di Ateneo 2012, Sapienza Universit\`a di Roma''. The work of G.C.I. was financed by the national FIRB grant RBFR12RA9W.  We thank  Rocco Chiriv\`i, Francesco Esposito and Paolo Bravi for helpful discussions on a previous version of the present paper. We are especially grateful to Corrado De Concini, Michael Finkelberg and Evgeny Feigin for their useful comments and to Peter Littelmann and Oksana Yakimova for several discussions about possible applications of this work.
\end{ack}

\bibliographystyle{amsplain}

\end{document}